\documentclass[10pt]{article}
\usepackage{amsmath,amssymb,amsthm}
\usepackage{color}
\usepackage[nohead,margin=1.0in]{geometry}
\usepackage{booktabs}
\usepackage{algorithm2e}

\usepackage{graphicx,caption,subcaption}
\usepackage{url}
\usepackage{mathtools}
\usepackage{amsmath}
\usepackage{amssymb}
\usepackage{commath}
\usepackage{siunitx}
\usepackage[normalem]{ulem}
\usepackage{hyperref}

\theoremstyle{plain}

\newtheorem{remark}{Remark}[section]

\newtheorem{lemma}{Lemma}[section]
\newtheorem{assumption}{Assumption}[section]
\newtheorem{proposition}{Proposition}[section]
\newtheorem{corollary}{Corollary}[section]

\numberwithin{equation}{section}

\newcommand{\R}{\mathbb{R}}

\title{Time-dependent parameter identification in a Fokker-Planck equation based magnetization model of large ensembles of nanoparticles}  
\author{
Hannes Albers\thanks{Center for Industrial Mathematics, University of Bremen,
Bibliothekstr. 5, 28357 Bremen, Germany ({\{halbers, tkluth\}@math.uni-bremen.de})}\and
Tobias Kluth\footnotemark[1]
}

\newcommand{\divg}{\operatorname{div}}
\newcommand{\dx}{\, \textnormal{d}}

\begin{document}
\SetKwComment{Comment}{/* }{ */}

\maketitle

\begin{abstract}

In this article, we consider a model motivated by large ensembles of nanoparticles' magnetization dynamics using the Fokker-Planck equation and analyze the underlying parabolic PDE being defined on a smooth, compact manifold without boundary with respect to time-dependent parameter identification using regularization schemes. In the context of magnetic particle imaging, possible fields of application can be found including calibration procedures improved by time-dependent particle parameters and dynamic tracking of nanoparticle orientation.
This results in reconstructing different parameters of interest, such as the applied magnetic field and the particles' easy axis. These problems are in particular addressed in the accompanied numerical study. 

\end{abstract}

\section{Introduction}
Parameter identification problems for the Fokker-Planck equation in the context of magnetization dynamics of magnetic nanoparticles (MNPs) are of major interest in possible applications in imaging techniques such as magnetic particle imaging (MPI) \cite{Gleich2005}.
In situations where a large number of stochastic processes subject to stochastic ordinary differential equations take place, modeling by a Fokker-Planck (FP) equation can be a helpful tool for mathematical analysis as well as numerical simulation \cite{risken1996fokker}. Instead of considering a large number of stochastic equations to model an ensemble of agents like MNPs, the FP equation is a deterministic partial differential equation that models the probability density function (PDF) of the ensemble's state. Since the FP equation is deterministic, established tools from the analysis of differential equations can be used. In the context of the present work, we consider a time-dependent parabolic partial differential equation (PDE) defined on a smooth manifold. Our goal is to identify time- or time-and-space-dependent parameters in this PDE from noisy measurements.


What results is a nonlinear ill-posed problem which needs to be solved properly by a regularization scheme. For a general basic introduction to this problem we refer to \cite{KaltenbacherNeubauerScherzer+2008}. 
Time-dependent parameter identification problems often have a specific inherent characteristic.
For parameter identification in time-dependent PDEs, the problem has been treated in a very general setting in \cite{Kaltenbacher_2017} and \cite{Nguyen_2019}. In \cite{Kaltenbacher_2018_PI_in_SDEs}, the problem of identifying parameters in a stochastic system via the Fokker-Planck equation is formalized and a method for calculating the adjoint equation is derived. Further works on the topic include \cite{Dunker_2014} for estimation of stationary parameters in Fokker-Planck equations and the recent collection \cite{KaltenbacherSchusterWald_2021} for a number of applications of parameter identification techniques in time-dependent settings.

In parameter identification for PDEs, the goal is to minimize a discrepancy while also identifying the desired parameter, especially in the presence of noise in the measurement. In the field of optimal control, similar problems are considered where the goal is typically to find any admissable control that minimizes a discrepancy term. However, many results from the field of optimal control of Fokker-Planck equations are also highly relevant in a parameter identification setting. For a review of control frameworks for the FP equation in various settings, see \cite{AnnunziatoBorzi2018} and references therein. In particular, in \cite{Aronna2021} a time-dependent control enters the FP linearly and the authors consider first and second-order optimality conditions for the optimal control problem which are derived for very mild regularity requirements on data functions and the spatial domain. In \cite{Fleig2017}, a time and space-dependent control which again enters linearly into the equation is analyzed in a Banach space setting and for a domain with homogenous Dirichlet boundary conditions. 
These findings in the literature influence the present work on 
parameter identification on a domain of a smooth manifold without boundary with time- and space-dependent parameters that enter the equation in specific, not necessarily linear, ways.

The application we focus on is the dynamic behavior of magnetic nanoparticles in the context of MPI. MPI is an emerging imaging modality that exploits magnetic nanoparticles typically suspended in fluid and can be employed for use cases such as imaging blood vessels, tracking medical instruments or magnetic hyperthermia \cite{Knopp2012, Salamon:2016cz, murase2015usefulness}. For modeling the imaging process, the dynamic behavior of MNPs in an applied magnetic field is of central interest \cite{Kluth:2017, Shasha2021}. In prior works, it has been demonstrated that modeling the MNP response in an external field via the Fokker-Planck equation can improve imaging performance compared to simplified models \cite{Albers_toolbox, Kluth_2019, Albers_immobilized}. There are two main principles of dynamic behavior of MNPs suspended in a fluid, namely the physical rotation of the particles (Brownian rotation, \cite{CoffeyKalmykov2012}) and a rotation of the magnetic moment relative to the particle due to micromagnetic effects (N\'{e}el rotation, \cite{Neel1953}). If MNPs are immobilized, the Brownian rotation can be suppressed and pure N\'{e}el relaxation can be observed, which can mathematically be described by the stochastic Landau-Lifshitz-Gilbert equation. If the MNPs are immobilized in the presence of a static magnetic field, their so-called easy axes can be aligned, making the whole ensemble exhibit anisotropic effects. It has been demonstrated \cite{MoeddelGrieseKluthKnopp2021} that this can be exploited to detect the orientation of an ensemble of MNPs. Furthermore, parameter identification on a deterministic version of the Landau-Lifshitz-Gilbert equation has been performed in \cite{Kaltenbacher2021}.

In this work, we introduce the mathematical model and the appropriate function spaces for the Fokker-Planck equation for nanoparticle magnetization dynamics in Section~\ref{sec:param_id}. In Section~\ref{sec:properties}, we prove well-posedness of the forward equation and introduce the parameter identification problem. Then, the existence of a minimum of the discrepancy is established and the Fr\'{e}chet differentiability of the forward operator is proven. Finally, in Section~\ref{sec:problems}, different choices of parameters whose reconstructions are of practical interest in the context of the MPI modality are considered and numerical simulations are presented for the different cases in Section \ref{sec:numerics}.

\section{Parameter identification for the Fokker-Planck equation}\label{sec:param_id}
We consider the Fokker-Planck equation defined on a compact time interval $[0,T]$ and a smooth, compact manifold without boundary $M$. $M$ is assumed to be equipped with a Riemannian metric which induces a norm and an inner product $(\cdot, \cdot)_M$ on the tangent space. In the context of the desired application, we are mainly interested in submanifolds of the Euclidean space, in particular of the 2-sphere embedded in $\mathbb{R}^3$. In this case, the Riemannian metric is induced by the ambient norm in tangent space. The gradient in space $\nabla_M$ is thus also understood as the covariant derivative on $M$, or equivalently the tangential part of the gradient in ambient space \cite{Grinfeld2013-hi}. 
Considering a parameter function $p$ in the drift term, the initial value problem for the Fokker-Planck equation in the general compact manifold case reads\\
\begin{align}
    u' &= \divg_M\left( \lambda\, \nabla_M u  + b(x,t;p) u \right)  &\text{in } M \times [0,T] \label{model_original}\\
    u(\cdot,0) &= u_0 &\text{on } M,\label{initial_value}
\end{align}
where $u' := \frac{\partial u}{\partial t}$ denotes the partial derivative of $u$ with respect to time. The parameter-dependent function $b:M \times [0,T] \to \mathbb{R}^3$ is assumed to belong to a function space $\mathcal{B}$. In order to ensure the well-posedness of the model equation, we choose $\mathcal{B}$ to be a suitable continuously embedded subspace of
\begin{align*}
    \mathcal{\hat{B}} = L^{\infty}\left( 0,T;L^\infty (M; \mathbb{R}^3)\right).
\end{align*}
The choice of the subspace affects the properties of the parameter-to-state operator and is discussed below.
We make the following further assumptions:
\newpage
\begin{assumption}\label{assumption}
\begin{enumerate}
    \item $\lambda > 0$.
    \item $b(x,t;p) = \Gamma (p)(x,t)$ with $\Gamma : \mathcal{P}_\mathrm{ad}\subset \mathcal{P} \to \mathcal{B}$ continuous and bounded, where $\mathcal{P}$ is a Banach space and $\mathcal{P}_\mathrm{ad}$ is a suitable set of admissable parameters
    \item $\mathcal{P}$ is a reflexive Banach space.
\end{enumerate}
\end{assumption}
For the analysis of the model equation, we transition to the weak formulation and consider a Gelfand triple (see e.g. \cite{Evans1998, showalter:1997}) 
\begin{align*}
    V \hookrightarrow H \hookrightarrow V^\ast,
\end{align*}
where $V$ is a separable Banach space, H is a Hilbert space and both embeddings are continuous. Then, the solution space we choose is 
\begin{align*}
    W(0,T) := \{u \in L^2(0,T;V):u' \in L^2(0,T;V^\ast)\}
\end{align*}
Since $W(0,T) \hookrightarrow C([0,T];H)$ \cite{showalter:1997}, the initial value is meaningful. The weak formulation is then obtained by formal multiplication with a test function and then integrating by parts, yielding an equation in the space $L^2(0,T;V^\ast)$:
\begin{lemma}
The weak formulation of the initial value problem \eqref{model_original}-\eqref{initial_value} reads
\begin{align}\label{weak_formulation_complete}
 &- \int_0^T \int_M v'(x,t) u(x,t) \dx x \dx t + \int_0^T \int_M \left[\lambda (\nabla u(x,t), \nabla v(x,t)) + u(x,t)(b(x,t;p),\nabla v(x,t))\right] \dx x \dx t\notag \\
=& \int_0^T \langle f(t), v(t) \rangle_{V'V} \dx t + \int_M u_0(x) v(0,x) \dx x
\end{align}
for  all $ v \in L^2(I,V)$ with $v' \in L^2(I,H)$ and  $v(T)=0$. A right-hand side $f\in L^2(0,T;V^\ast)$ has been added in order to study the more general case. This is equivalent to
\begin{align}
    u' + A(\cdot\,;b)u &= f \quad \text{ in }L^2(0,T;V^\ast)\label{model_weak}\\
    u(0) &= u_0\label{iv_weak}
\end{align}
for the abstract linear operators $A(t;b): V \to V^\ast$ which are defined for almost all $t \in (0,T)$ and all $b \in \mathcal{B}$ according to 
\begin{align*}
    \langle A(t;b) u, v\rangle_{V^\ast,V} = \int_M \left[\lambda (\nabla u(x,t), \nabla v(x,t)) + u(x,t)(b(x,t;p),\nabla v(x,t)) \right]\dx x
\end{align*}
\end{lemma}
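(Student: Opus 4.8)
The plan is to derive \eqref{weak_formulation_complete} from \eqref{model_original}--\eqref{initial_value} by the usual chain of integrations by parts, and then to read off the abstract form \eqref{model_weak}--\eqref{iv_weak} by a density argument; assuming the classical form is meaningful we proceed as follows. First I multiply \eqref{model_original} by a test function $v$ with $v\in L^2(0,T;V)$, $v'\in L^2(0,T;H)$, $v(T)=0$ and integrate over $M\times[0,T]$. For the time derivative I use the integration-by-parts identity $\int_0^T\langle u'(t),v(t)\rangle\,\d t=(u(T),v(T))_H-(u(0),v(0))_H-\int_0^T(u(t),v'(t))_H\,\d t$, valid for $u\in W(0,T)$ and $v$ of the stated regularity (see \cite{showalter:1997}); together with $v(T)=0$ and $u(0)=u_0$ this produces the terms $-\int_0^T\int_M v'u$ and $\int_M u_0\,v(0)$. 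For the spatial term I apply the divergence theorem on $M$ to $\divg_M(\lambda\nabla_M u+bu)$ tested against $v$: since $M$ is a compact manifold \emph{without boundary} no boundary flux appears, leaving $-\int_M(\lambda\nabla_M u+bu,\nabla_M v)_M$. Adding a general right-hand side $f\in L^2(0,T;V^\ast)$ contributes $\int_0^T\langle f(t),v(t)\rangle_{V',V}\,\d t$, and collecting terms gives \eqref{weak_formulation_complete}.

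Next I check that the bilinear form defining $A(t;b)$ is bounded on $V\times V$ for a.e. $t$ and every $b\in\mathcal B\subset\hat{\mathcal B}$: the term $\lambda(\nabla u,\nabla v)_M$ is controlled by $\lambda\|u\|_V\|v\|_V$, while $\int_M u\,(b,\nabla_M v)_M$ is controlled, using $b(\cdot,t)\in L^\infty(M;\R^3)$ and the continuous embedding $V\hookrightarrow H$, by $\|b(\cdot,t)\|_{L^\infty}\,\|u\|_H\,\|v\|_V\lesssim\|b\|_{\hat{\mathcal B}}\,\|u\|_V\,\|v\|_V$. Hence $A(t;b)\in\mathcal L(V,V^\ast)$ with operator norm bounded uniformly in $t$, so for $u\in L^2(0,T;V)$ the map $t\mapsto A(t;b)u(t)$ lies in $L^2(0,T;V^\ast)$ and \eqref{model_weak} is a meaningful identity in that space.

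For the equivalence, the direction \eqref{model_weak}--\eqref{iv_weak} $\Rightarrow$ \eqref{weak_formulation_complete} is obtained by pairing \eqref{model_weak} with $v(t)$, integrating in time, and invoking the same integration-by-parts identity together with $u(0)=u_0$ and $v(T)=0$. For the converse I first restrict \eqref{weak_formulation_complete} to separated test functions $v(x,t)=\varphi(t)\psi(x)$ with $\varphi\in C_c^\infty(0,T)$ and $\psi\in V$; the $u_0$ term drops and one obtains $-\int_0^T\varphi'(t)(u(t),\psi)_H\,\d t=\int_0^T\varphi(t)\langle f(t)-A(t;b)u(t),\psi\rangle\,\d t$, i.e.\ $\frac{\d}{\d t}(u,\psi)_H=\langle f-A(\cdot;b)u,\psi\rangle$ in $\mathcal D'(0,T)$ for every $\psi\in V$; since the right-hand side lies in $L^2(0,T;V^\ast)$ this is precisely $u'=f-A(\cdot;b)u$ in $L^2(0,T;V^\ast)$. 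Knowing then that $u\in W(0,T)\hookrightarrow C([0,T];H)$, I recover the initial condition by choosing $\varphi\in C^\infty([0,T])$ with $\varphi(T)=0$, $\varphi(0)=1$, integrating \eqref{weak_formulation_complete} by parts in time, and using \eqref{model_weak} to get $(u(0)-u_0,\psi)_H=0$ for all $\psi$ in the dense subspace $V\subset H$, hence $u(0)=u_0$.

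I do not expect any of these steps to be deep, but the two points needing care are the spatial integration by parts, which genuinely uses that $M$ is closed (compact, no boundary) so that the manifold divergence theorem produces no boundary term, and the density argument of the last step, which converts the family of integral identities over all admissible $v$ into the pointwise-in-time abstract equation plus a separately meaningful initial condition — here the embedding $W(0,T)\hookrightarrow C([0,T];H)$ and the density of $V$ in $H$ are what make it work. One should also confirm that the chosen subspace $\mathcal B$, being continuously embedded in $\hat{\mathcal B}$, suffices for the boundedness estimate above, which it does since only the $L^\infty(0,T;L^\infty(M;\R^3))$-norm of $b$ enters there.
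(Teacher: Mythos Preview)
Your argument is correct and is precisely the standard derivation underlying the result the paper invokes: the paper's own proof consists of a single citation to \cite[III, Prop.~2.1]{showalter:1997}, and what you have written is essentially an unpacking of that proposition (integration by parts in time via the $W(0,T)\hookrightarrow C([0,T];H)$ embedding, the divergence theorem on the closed manifold for the spatial part, and the density/separated-test-function argument for the converse). Nothing is missing or incorrect; you have simply made explicit what the paper delegates to the reference.
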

\begin{proof}
 Follows directly from \cite[III. Prop. 2.1]{showalter:1997}.
\end{proof}
A natural choice of spaces in this setting is $V = H^1(M)$ and $H = L^2(M)$, which is what we are choosing in the following. 

Now that the governing equation is introduced, we wish to consider the parameter identification problem. For this, we consider the forward operator 
\begin{align*}
    F = G \circ S \circ \Gamma,
\end{align*}
where 
\begin{align*}
    S : \mathcal{B} & \to W(0,T)\\
    b &\mapsto u, \text{ where $u$ solves \eqref{model_weak}-\eqref{iv_weak}}.
\end{align*}
We call $S$ the parameter-to-state or parameter-to-solution map. It maps the drift term $b\in \mathcal{B}$ to the solution $u$ of the model equation with drift term $b$. $b$ can either be considered as the \textit{parameter} of the equation itself, or it can be dependent on a parameter $p$ through the operator $\Gamma$. The observation operator $G: W(0,T) \to 
Y:= L^2(0,T;\mathbb{R}^3)$ is assumed to be linear and continuous; motivated by the application of interest MPI, we consider the expectation of the function $u$, i.e.,
\begin{align}\label{eq:expectation}
    Gu(t) = \int_M m \, u(m,t) \dx m.
\end{align}
As an alternative for illustration purposes and for examining numerical procedures, we will also consider $G$ as the embedding operator, i.e. $G_{\mathrm{id}} := \mathrm{id}_{W(0,T)\to Y}$.

The parameter identification problem can be described as a minimization problem of a cost functional $J: Y \times \mathcal{P} \to \R$ in general. The problem can then be formulated as obtaining the optimal parameter $p^\ast$ via
\begin{align}
    p^\ast \in  \underset{p\in \mathcal{P}_\mathrm{ad}}{\operatorname{argmin}} \,J(F(p),p),\label{eq:optimization}
\end{align}
where the cost functional $J$ depends on a (typically noisy) observation $y^\delta$ and is weakly lower semicontinuous with respect to the input tuple. A typical choice for $J$ is the $L^2$-discrepancy
\begin{align*}
    J_{L^2}(F(p),p) &:= \|F(p) - y^\delta\|_{L^2(0,T;\mathbb{R}^3)}^2,
\end{align*}
which fulfills the requirement and which we will consider together with the nonlinear Landweber method as a regularization scheme \cite{HankeNeubauerScherzer95}. Other standard choices include the Tikhonov regularized $L^2$-discrepancy as treated in \cite{Fleig2017} among others like, e.g., sparsity regularization \cite{Jin_2012} or $L^p$-discrepancies for $p\neq 2$ \cite{SchusterKaltenbacherHofmannKazimierski+2012}.

\subsection{Properties of the forward operator}\label{sec:properties}
In order to analyze the parameter identification problem
\begin{align*}
    \text{find }p \in \mathcal{P}_\mathrm{ad}:\quad  F(p) = y,
\end{align*}
for given noise-corrupted measurement $y^\delta \in Y = L^2(0,T;\mathbb{R}^3)$ with $ \|y-y^\delta\|\leq \delta$, we first need to establish well-definedness of the forward operator $F$. Since $G$ and $\Gamma$ are well-defined by definition, well-definedness of $S$ remains to be shown, i.e., the fact that the initial value problem \eqref{model_weak}-\eqref{iv_weak} admits a unique solution for all $b \in \mathcal{B}$.
\begin{lemma}
\label{continuity_coercivity}
With Assumption \ref{assumption}, the operator $A$ as in \eqref{model_weak}-\eqref{iv_weak} satisfies for almost all $t \in (0,T)$, all $b \in \mathcal{B}$ and all $u,v \in V$:
\begin{align}
\langle A(t;b)u,v\rangle_{V^\ast,V} &\leq C\, \|u\|_V \|v\|_V, \\
\langle A(t;b)u, u \rangle_{V^\ast,V} &\geq \frac{\lambda}{2}\norm{u}^2_{H^1(M)} - Q\norm{u}^2_{L^2(M)},\label{eq:garding}
\end{align}
with $C > 0$,  $Q=\frac{\lambda}{2} + \frac{1}{2\lambda} \|b\|_\mathcal{\hat{B}}^2$.
\end{lemma}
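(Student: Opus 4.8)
The plan is to verify both estimates directly from the definition of $\langle A(t;b)u,v\rangle_{V^\ast,V} = \int_M \lambda(\nabla u,\nabla v) + u\,(b,\nabla v)\dx x$, treating the diffusion term and the drift term separately and bounding the drift term using the $L^\infty$-bound on $b$ together with Hölder and Young inequalities. For the continuity estimate, I would apply Cauchy--Schwarz on the Riemannian inner product in the first term to get $\lambda\int_M |\nabla u|\,|\nabla v|\dx x \le \lambda \|\nabla u\|_{L^2}\|\nabla v\|_{L^2}$, and for the second term use $|u\,(b,\nabla v)| \le \|b\|_{\hat{\mathcal B}}\,|u|\,|\nabla v|$ pointwise, then Cauchy--Schwarz to obtain $\|b\|_{\hat{\mathcal B}}\|u\|_{L^2}\|\nabla v\|_{L^2}$. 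Summing and bounding both $L^2$-norms by the $H^1(M)=V$-norm gives $\langle A(t;b)u,v\rangle \le (\lambda + \|b\|_{\hat{\mathcal B}})\|u\|_V\|v\|_V$, so $C = \lambda + \|b\|_{\hat{\mathcal B}}$ (or any larger constant) works; note this uses that $\mathcal B$ is continuously embedded in $\hat{\mathcal B}$ so $\|b\|_{\hat{\mathcal B}}$ is finite and uniformly controlled.

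For the Gårding-type inequality \eqref{eq:garding}, setting $v=u$ gives $\langle A(t;b)u,u\rangle = \lambda\|\nabla u\|_{L^2(M)}^2 + \int_M u\,(b,\nabla u)\dx x$. The diffusion term is already $\lambda\|\nabla u\|_{L^2}^2$; I would keep half of it, $\tfrac{\lambda}{2}\|\nabla u\|_{L^2}^2$, and use the other half to absorb the drift term. Estimate the drift term from below by $-\int_M |u|\,\|b\|_{\hat{\mathcal B}}\,|\nabla u|\dx x \ge -\|b\|_{\hat{\mathcal B}}\|u\|_{L^2}\|\nabla u\|_{L^2}$, and then apply Young's inequality $ab \le \tfrac{\lambda}{2}a^2 + \tfrac{1}{2\lambda}b^2$ with $a = \|\nabla u\|_{L^2}$, $b = \|b\|_{\hat{\mathcal B}}\|u\|_{L^2}$, yielding $\ge -\tfrac{\lambda}{2}\|\nabla u\|_{L^2}^2 - \tfrac{1}{2\lambda}\|b\|_{\hat{\mathcal B}}^2\|u\|_{L^2}^2$. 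Adding this to $\lambda\|\nabla u\|_{L^2}^2$ leaves $\tfrac{\lambda}{2}\|\nabla u\|_{L^2}^2 - \tfrac{1}{2\lambda}\|b\|_{\hat{\mathcal B}}^2\|u\|_{L^2}^2$. Finally I would rewrite $\tfrac{\lambda}{2}\|\nabla u\|_{L^2}^2 = \tfrac{\lambda}{2}\|u\|_{H^1(M)}^2 - \tfrac{\lambda}{2}\|u\|_{L^2(M)}^2$ using $\|u\|_{H^1}^2 = \|u\|_{L^2}^2 + \|\nabla u\|_{L^2}^2$, which produces exactly $\tfrac{\lambda}{2}\|u\|_{H^1(M)}^2 - Q\|u\|_{L^2(M)}^2$ with $Q = \tfrac{\lambda}{2} + \tfrac{1}{2\lambda}\|b\|_{\hat{\mathcal B}}^2$ as claimed.

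There is no real obstacle here; the argument is a routine application of Hölder and Young. The only point requiring a little care is that these are integrals over the manifold $M$ with the Riemannian volume element, so the inner product $(\cdot,\cdot)$ and the gradient $\nabla = \nabla_M$ are the covariant ones on $M$ — but since $M$ is compact and the metric is fixed, the pointwise Cauchy--Schwarz inequality $|(b,\nabla v)| \le |b|\,|\nabla v|$ on each tangent space and the definition $\|u\|_{H^1(M)}^2 = \|u\|_{L^2(M)}^2 + \|\nabla_M u\|_{L^2(M)}^2$ are exactly as in the Euclidean case, so nothing changes. I would also remark that measurability in $t$ of $t\mapsto\langle A(t;b)u,v\rangle$ follows from the assumed regularity $b\in\hat{\mathcal B} = L^\infty(0,T;L^\infty(M;\mathbb R^3))$, which is what makes $A(\cdot;b)$ a well-defined element in the sense required for \cite[III. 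Prop. 2.1]{showalter:1997} to apply in the subsequent well-posedness argument.
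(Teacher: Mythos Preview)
Your proof is correct and follows essentially the same route as the paper: H\"older/Cauchy--Schwarz for the continuity bound, and Young's inequality (with the splitting parameter $\epsilon=\lambda/2$) for the G\aa rding estimate, followed by the same rewriting $\tfrac{\lambda}{2}\|\nabla u\|_{L^2}^2 = \tfrac{\lambda}{2}\|u\|_{H^1}^2 - \tfrac{\lambda}{2}\|u\|_{L^2}^2$. The only cosmetic difference is that the paper carries a general $\epsilon$ through the computation before specializing, whereas you plug in the right constant from the start; your additional remarks on the manifold setting and $t$-measurability are fine but not needed for the lemma itself.
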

\begin{proof}
For the first part, apply Hölder's inequality. For the second part, use Young's inequality to obtain for any $\epsilon > 0$
\begin{equation*}
 \left|\int_M u(b,\nabla u) \dx x \right| \leq \int_M |u| |b| |\nabla u| \dx x \leq \int_M \epsilon |\nabla u|^2 + \frac{1}{4\epsilon} |u|^2 |b|^2 \dx x \leq \epsilon \| \nabla u \|_{L^2(M)}^2 + \frac{1}{4\epsilon} \| u \|_{L^2(M)}^2 \|b\|_{L^\infty((0,T)\times M)}^2.
\end{equation*}
 Using this, we can estimate 
\begin{align*}
 \langle A(t)u, u \rangle_{V'V} &\geq \lambda \|\nabla u\|_{L^2(M)}^2 + \int_M u (b,\nabla u)_M \dx x \\
& \geq \lambda \|\nabla u\|_{L^2(M)}^2 - \epsilon \| \nabla u \|_{L^2(M)}^2 - \frac{1}{4\epsilon} \| u \|_{L^2(M)}^2 \|b\|_{L^\infty((0,T)\times M)}^2 \\
&=(\lambda-\epsilon) \|u\|_{H^1(M)}^2 - \left[(\lambda -\epsilon)  + \frac{1}{4\epsilon}\|b\|_{L^\infty((0,T)\times M)}^2 \right] \| u \|_{L^2(M)}^2  
\end{align*}
Choosing $\epsilon=\lambda/2$, we obtain \eqref{eq:garding}.

\end{proof}

With these estimates, we can immediately evoke standard results from the literature to prove that $S$ is indeed well-defined.

\begin{proposition}\label{existence_uniqueness}
With Assumption \ref{assumption}, the initial value problem \eqref{model_weak}-\eqref{iv_weak} has a unique solution $u \in W(0,T)$ for each $b \in \mathcal{B}$. The solution is nonnegative almost everywhere if the initial value is, and it satisfies
\begin{align*}
    \int_M u(t,x) \, dx &= \int_M u_0(x) \, dx \quad \text{for all }t \in [0,T].
\end{align*}

\end{proposition}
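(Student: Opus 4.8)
The plan is to treat the three assertions in turn, with existence and uniqueness reduced to a citation and the qualitative properties obtained by energy estimates with well-chosen test functions. For existence and uniqueness I would invoke the standard theory of linear parabolic equations in a Gelfand triple (see e.g.\ \cite{showalter:1997, Evans1998}): $V=H^1(M)$ is a separable Hilbert space, $H=L^2(M)$, $u_0\in H$ and $f\in L^2(0,T;V^\ast)$; for fixed $u,v\in V$ the map $t\mapsto\langle A(t;b)u,v\rangle_{V^\ast,V}$ is measurable because $b\in\mathcal{\hat B}$ is strongly measurable in $t$; and Lemma~\ref{continuity_coercivity} provides the uniform boundedness $\|A(t;b)\|_{\mathcal L(V,V^\ast)}\le C$ together with the G\aa rding inequality \eqref{eq:garding}. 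Hence the cited existence-and-uniqueness theorem yields a unique $u\in W(0,T)$ with $u(0)=u_0$. If one prefers a version of the theorem that requires genuine coercivity, I would first pass to $\tilde u:=e^{-Qt}u$, which solves the analogous problem with $A(\cdot\,;b)$ replaced by $A(\cdot\,;b)+Q\,\mathrm{id}$ and right-hand side $e^{-Qt}f$; by \eqref{eq:garding} that operator is coercive with constant $\lambda/2$, the theorem applies, and one transfers back via $u=e^{Qt}\tilde u$.

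For the remaining two properties I would restrict to $f=0$, i.e.\ the genuine Fokker--Planck equation \eqref{model_original} (for general $f$ neither property need hold), and for nonnegativity additionally assume $u_0\ge 0$ a.e. To prove nonnegativity I would test with the truncation $u^-(t):=\max\{-u(t),0\}$. Since $u\in L^2(0,T;H^1(M))$, Stampacchia's lemma gives $u^-\in L^2(0,T;H^1(M))$ with $\nabla_M u^-=-\mathbf{1}_{\{u<0\}}\nabla_M u$, so $u^-(t)$ is an admissible element of $V$ to pair with the abstract equation \eqref{model_weak} for a.e.\ $t$. Using the chain rule valid in $W(0,T)$ one has $\tfrac12\tfrac{\d}{\d t}\|u^-(t)\|_{L^2(M)}^2=-\langle u'(t),u^-(t)\rangle_{V^\ast,V}=\langle A(t;b)u(t),u^-(t)\rangle_{V^\ast,V}$. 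On $\{u<0\}$ one has $\nabla_M u=-\nabla_M u^-$ and $u=-u^-$, while $\nabla_M u^-=0$ elsewhere, so $\langle A(t;b)u,u^-\rangle_{V^\ast,V}=-\lambda\|\nabla_M u^-\|_{L^2(M)}^2-\int_M u^-(b,\nabla_M u^-)_M\dx x$; bounding the last integral exactly as in the proof of Lemma~\ref{continuity_coercivity} with $\epsilon=\lambda$ cancels the gradient term and leaves $\tfrac{\d}{\d t}\|u^-(t)\|_{L^2(M)}^2\le\tfrac{1}{2\lambda}\|b\|_{\mathcal{\hat B}}^2\|u^-(t)\|_{L^2(M)}^2$. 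Since $u\in C([0,T];H)$ and $w\mapsto w^-$ is Lipschitz on $H$, $u^-(0)=u_0^-=0$, so Gr\"onwall's inequality forces $u^-\equiv 0$, i.e.\ $u(t)\ge 0$ a.e.\ for every $t$.

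For mass conservation I would test with the constant function $v\equiv 1$, which lies in $H^1(M)$ because $M$ is compact (finite volume). Since $\nabla_M 1=0$, the operator term vanishes identically: $\langle A(t;b)u(t),1\rangle_{V^\ast,V}=0$. Because $W(0,T)\hookrightarrow C([0,T];H)$ and $t\mapsto\langle u(t),1\rangle_H=\int_M u(t,x)\dx x$ is absolutely continuous with derivative $\langle u'(t),1\rangle_{V^\ast,V}$, the weak equation gives $\tfrac{\d}{\d t}\int_M u(t,x)\dx x=\langle f(t),1\rangle_{V^\ast,V}=0$; integrating and using $u(0)=u_0$ in $H$ yields $\int_M u(t,x)\dx x=\int_M u_0(x)\dx x$ for all $t\in[0,T]$.

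The existence/uniqueness part is essentially routine once Lemma~\ref{continuity_coercivity} is available. I expect the only real subtlety to be the nonnegativity argument: one must justify that $u^-$ is an admissible test function and that $t\mapsto\|u^-(t)\|_{L^2(M)}^2$ is absolutely continuous with the stated derivative in the $W(0,T)$ setting, and one should flag explicitly that nonnegativity and mass conservation are statements about the homogeneous equation with $u_0\ge 0$. On the manifold all the sign manipulations are pointwise in the tangent space, so nothing beyond the Euclidean argument is needed; the compactness of $M$ (hence $1\in H^1(M)$) is what makes the mass-conservation test admissible.
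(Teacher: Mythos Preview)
Your proposal is correct and follows essentially the same route as the paper. The paper simply cites \cite[Thm.~11.7]{chipot2012elements} for existence and uniqueness (same Gelfand-triple theory you invoke via \cite{showalter:1997,Evans1998}) and \cite[Thm.~11.9]{chipot2012elements} for nonnegativity, whereas you spell out the standard $u^-$/Gr\"onwall proof of that weak maximum principle explicitly; for mass conservation the paper does the formal strong-form computation with the divergence theorem on the boundaryless $M$, which is exactly your ``test with $v\equiv 1$'' argument read at the level of the weak formulation---your version is in fact slightly more careful, since it does not presuppose more regularity than $u\in W(0,T)$.
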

\begin{proof}
See \cite[Thm. 11.7]{chipot2012elements} for existence and uniqueness. The assumptions therein about continuity and coercivity of the operator $A$ are proven in Lemma \ref{continuity_coercivity}.
For the nonnegativity, see the weak maximum principle in \cite[Thm. 11.9]{chipot2012elements}. The proof works without modification in the manifold case. For the third claim, we can observe without difficulty:
\begin{align*}
    \frac{\partial }{\partial t} \int_M u \, dx &= \int_M -\divg_M \left( \lambda \nabla u + b u \right) \, dx,\\
    &= 0
\end{align*}
by applying the divergence theorem to $M$ (whose boundary is empty). Thus, the mean value of $u$ remains constant over time. 
\end{proof}
\begin{remark}
In particular, if $u_0$ is a probability density defined on $M$, then the solution $u(t,\cdot)$ of \eqref{model_weak}-\eqref{iv_weak} will be a probability density for each $t \in [0,T]$.
\end{remark}
We also obtain an a-priori estimate for the solution $u$:
\begin{proposition}\label{estimates}
In the setting of Proposition \ref{existence_uniqueness}, the following estimates hold:
\begin{align*}
    \|u\|^2_{L^\infty(0,T;H)} &\leq C_1 \left( \|u_0\|^2_H + \|f\|^2_{L^2(0,T;V^\ast)}\right),\\
    \|u\|_{L^2(0,T;V)}^2 &\leq C_2 \left( \|u_0\|^2_H + \|f\|^2_{L^2(0,T;V^\ast)}\right),\\
    \|u'\|^2_{L^2(0,T;V^\ast)} & \leq C_3 \left( \|u_0\|^2_H + \|f\|^2_{L^2(0,T;V^\ast)} \right).
\end{align*}
with $C_1,C_2,C_3>0$ and all depending continuously on $\|b\|_{\hat{\mathcal{B}}}$. In particular, a $C>0$ exists such that 
\begin{align}\label{estimate_W}
    \|u\|^2_{W(0,T)} &\leq C \left( \|u_0\|^2_H + \|f\|^2_{L^2(0,T;V^\ast)} \right).
\end{align}
\end{proposition}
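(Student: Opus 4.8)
The three bounds are the standard parabolic energy estimates for a Gelfand-triple problem; the plan is to derive them in the usual order and, along the way, to keep explicit track of the dependence of every constant on $\|b\|_{\hat{\mathcal{B}}}$. First I would work with the abstract form \eqref{model_weak}, pair it (in $V^\ast,V$) with $u(t)\in V$ for almost every $t\in(0,T)$, and use the identity $\langle u'(t),u(t)\rangle_{V^\ast,V}=\tfrac12\tfrac{\d}{\d t}\|u(t)\|_H^2$, valid for $u\in W(0,T)$ (e.g.\ \cite[III, Prop.~1.2]{showalter:1997}). Inserting the Gårding inequality \eqref{eq:garding} of Lemma~\ref{continuity_coercivity}, the bound $|\langle f(t),u(t)\rangle_{V^\ast,V}|\le \tfrac{\lambda}{4}\|u(t)\|_V^2+\tfrac1\lambda\|f(t)\|_{V^\ast}^2$ (Young), and the choice $V=H^1(M)$, $H=L^2(M)$, one arrives at
\begin{equation*}
\tfrac12\tfrac{\d}{\d t}\|u(t)\|_H^2+\tfrac{\lambda}{4}\|u(t)\|_V^2\ \le\ Q\,\|u(t)\|_H^2+\tfrac1\lambda\|f(t)\|_{V^\ast}^2, \qquad Q=\tfrac{\lambda}{2}+\tfrac{1}{2\lambda}\|b\|_{\hat{\mathcal{B}}}^2 .
\end{equation*}

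Dropping the non-negative term $\tfrac{\lambda}{4}\|u(t)\|_V^2$ and applying Grönwall's lemma to $t\mapsto\|u(t)\|_H^2$ gives $\|u(t)\|_H^2\le e^{2QT}\big(\|u_0\|_H^2+\tfrac2\lambda\|f\|_{L^2(0,T;V^\ast)}^2\big)$ for a.e.\ $t\in[0,T]$, hence for all $t$ by $W(0,T)\hookrightarrow C([0,T];H)$, i.e.\ the first estimate with $C_1=e^{2QT}\max\{1,2/\lambda\}$. Integrating the displayed inequality over $(0,T)$ instead, bounding $\int_0^T\|u(t)\|_H^2\,\d t\le T\,\|u\|_{L^\infty(0,T;H)}^2$ and inserting the first estimate, yields the second estimate. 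Finally, reading \eqref{model_weak} as $u'(t)=f(t)-A(t;b)u(t)$ in $V^\ast$ and using the continuity bound $\|A(t;b)u(t)\|_{V^\ast}\le C\|u(t)\|_V$ implicit in Lemma~\ref{continuity_coercivity}, one gets $\|u'\|_{L^2(0,T;V^\ast)}^2\le 2\|f\|_{L^2(0,T;V^\ast)}^2+2C^2\|u\|_{L^2(0,T;V)}^2$, and the second estimate supplies the third; summing the last two gives \eqref{estimate_W}.

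The only point that is not purely mechanical is the assertion that $C_1,C_2,C_3$ depend \emph{continuously} on $\|b\|_{\hat{\mathcal{B}}}$, which does not follow merely from quoting the textbook estimates, so I would spell it out: the constant $Q$ is a polynomial in $\|b\|_{\hat{\mathcal{B}}}$; the continuity constant $C$ in Lemma~\ref{continuity_coercivity} can be taken, by Hölder, as $C=\lambda+\|b\|_{\hat{\mathcal{B}}}$; and the Grönwall step contributes the factor $e^{2QT}$. Since $C_1,C_2,C_3$ are then built from $\lambda$, $T$ and $\|b\|_{\hat{\mathcal{B}}}$ by sums, products, maxima and exponentials, they are continuous in $\|b\|_{\hat{\mathcal{B}}}$, as claimed. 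Alternatively, the mere existence of the estimates can be cited from \cite[Ch.~11]{chipot2012elements}, with the $b$-dependence of the constants extracted from Lemma~\ref{continuity_coercivity} exactly as above.
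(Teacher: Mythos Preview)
Your proof is correct and follows essentially the same route as the paper: pair \eqref{model_weak} with $u(t)$, apply the G\r{a}rding inequality of Lemma~\ref{continuity_coercivity}, split $\langle f,u\rangle$ via Young, use Gr\"onwall for the $L^\infty(0,T;H)$ bound, integrate for the $L^2(0,T;V)$ bound, and read off the $L^2(0,T;V^\ast)$ bound on $u'$ from the equation and the continuity of $A$. The only (cosmetic) differences are that the paper uses two different Young parameters $\epsilon=\lambda/2$ and $\epsilon=\lambda/4$ to obtain the first two estimates separately, whereas you work with a single inequality; and you spell out the continuous dependence of $C_1,C_2,C_3$ on $\|b\|_{\hat{\mathcal{B}}}$, which the paper asserts but does not detail.
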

\begin{proof}
Applying $u(t)$ to equation \eqref{model_weak} for $t\in (0,T)$ yields, using Lemma \ref{continuity_coercivity}:
\begin{align}
    \frac{1}{2} \frac{d}{dt} \|u(t)\|^2_H + \langle A(t)u(t),u(t)\rangle_{V^\ast,V} &= \langle f(t),u(t) \rangle _{V^\ast,V} \notag\\
    \implies \frac{d}{dt} \|u(t)\|^2_H + \lambda \|u(t)\|^2_V &\leq 2 \langle f(t),u(t) \rangle + 2Q(t) \|u(t)\|^2_H \notag\\
    &\leq 2\epsilon \|u(t)\|_V^2 + \frac{1}{2\epsilon}\|f(t)\|^2_{V^\ast} + 2Q(t) \|u(t)\|^2_H.\label{young}
\end{align}
With $\varepsilon = \frac{\lambda}{2}$, we get
\begin{align*}
    \frac{d}{dt}\|u(t)\|^2_H + \lambda \|u(t)\|^2_V &\leq \lambda \|u(t)\|_V^2 + \frac{1}{\lambda} \|f(t)\|^2_{V^\ast} + 2Q(t) \|u(t)\|^2_H.
\end{align*}
We apply the Gronwall inequality.
\begin{align*}
\|u(t)\|_H^2 &\leq e^{\int_0^t 2Q(\tau)\, d\tau } \left( \|u(0)\|^2_H + \frac{1}{\lambda}\int_0^t \|f(\tau)\|^2_{V^\ast}\, d \tau \right).
\end{align*}
This yields the first estimate. If we choose $\epsilon = \lambda/4$ in \eqref{young} and integrate from $0$ to $T$, we obtain the second estimate. The third estimate can be obtained by applying an arbitrary $\phi \in L^2(0,T;V)$ to \eqref{model_weak} and inserting the first two estimates.
\end{proof}

Next, we prove that the optimization problem formulated above has at least one solution. For this, we follow the general structure of the proofs in \cite[Thm. 3.1]{Aronna2021} and \cite[Thm. 5.2]{Fleig2017}. First, we need some basic results about compact embeddings of Sobolev and Bochner spaces.
\begin{lemma}
If $M$ is a compact manifold, $s \in \mathbb{R}$, then the embeddings
\begin{align*}
    H^{s+\sigma} (M) \hookrightarrow H^s(M)
\end{align*}
are compact for $\sigma> 0$. In particular,
\begin{align*}
    H^1(M) \hookrightarrow L^2(M) \text{ compactly.}
\end{align*}
\end{lemma}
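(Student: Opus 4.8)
The plan is to reduce the claim to the classical Rellich--Kondrachov theorem in $\mathbb{R}^n$ ($n = \dim M$) via a finite atlas and a partition of unity. Fix finitely many charts $(U_i,\varphi_i)_{i=1}^N$ covering $M$ and a smooth partition of unity $(\chi_i)_{i=1}^N$ subordinate to $(U_i)$. The manifold Sobolev space $H^s(M)$, for $s \in \mathbb{R}$, is set up so that (equivalently to its intrinsic definition) $u \in H^s(M)$ if and only if each pulled-back, zero-extended piece $w^i := (\chi_i u)\circ\varphi_i^{-1}$ lies in $H^s(\mathbb{R}^n)$ with support in a fixed compact set $K_i$, and the norm $\|u\|_{H^s(M)}$ is equivalent to $\sum_{i=1}^N \|w^i\|_{H^s(\mathbb{R}^n)}$; this equivalence of the intrinsic and the local descriptions --- valid for all real $s$ because cut-off multiplication and pullback by a diffeomorphism are bounded on $H^s(\mathbb{R}^n)$ for every $s$ --- is the only input specific to manifolds and is standard.

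Granting this dictionary, let $(u_m)_m$ be a bounded sequence in $H^{s+\sigma}(M)$. For each fixed $i$ the sequence $w^i_m := (\chi_i u_m)\circ\varphi_i^{-1}$ is then bounded in $H^{s+\sigma}(\mathbb{R}^n)$ with supports contained in the fixed compact set $K_i$; since $\sigma > 0$, the set of $H^{s+\sigma}(\mathbb{R}^n)$-functions supported in $K_i$ embeds compactly into $H^s(\mathbb{R}^n)$ (this is the Euclidean Rellich--Kondrachov theorem). Extracting a subsequence along which $w^1_m$ converges in $H^s(\mathbb{R}^n)$, then a further subsequence for $w^2_m$, and so on through $i = N$, we obtain one subsequence (not relabeled) along which every $w^i_m$ converges in $H^s(\mathbb{R}^n)$. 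Transporting back and summing, $u_m = \sum_{i=1}^N w^i_m\circ\varphi_i$ is Cauchy, hence convergent, in $H^s(M)$ by the norm equivalence. Thus every bounded sequence in $H^{s+\sigma}(M)$ has an $H^s(M)$-convergent subsequence, i.e.\ $H^{s+\sigma}(M)\hookrightarrow H^s(M)$ compactly; the ``in particular'' claim is the special case $s = 0$, $\sigma = 1$.

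The main obstacle is not the finite-atlas bookkeeping but pinning down the correct definition of $H^s(M)$ for general real $s$ together with the norm equivalence above; once that is fixed the result is immediate from the Euclidean theorem. An equally short chart-free alternative uses the discrete spectrum $0 = \lambda_0 \le \lambda_1 \le \cdots \to \infty$ of the Laplace--Beltrami operator with $L^2(M)$-orthonormal eigenbasis $(\phi_k)_k$ and the characterization $\|u\|_{H^s(M)}^2 \simeq \sum_k (1+\lambda_k)^s \, c_k^2$ with $c_k = (u,\phi_k)_{L^2(M)}$: from a bounded sequence in $H^{s+\sigma}(M)$ one extracts by a diagonal argument a subsequence whose coefficients $c_k^{(m)}$ converge in every mode, and the tail estimate
\[
\sum_{k \ge N}(1+\lambda_k)^s\, \big(c_k^{(m)}-c_k\big)^2 \;\le\; (1+\lambda_N)^{-\sigma}\sum_{k \ge N}(1+\lambda_k)^{s+\sigma}\big(c_k^{(m)}-c_k\big)^2
\]
is uniformly small since $\lambda_N \to \infty$, while the finitely many remaining modes converge; this is the standard compactness argument for weighted $\ell^2$ sequence spaces.
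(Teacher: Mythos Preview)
Your proposal is correct. Both arguments you give --- the finite-atlas reduction to the Euclidean Rellich--Kondrachov theorem and the spectral/Weyl-type argument via the Laplace--Beltrami eigenbasis --- are standard and complete proofs of the claim.

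The paper, by contrast, does not prove the lemma at all: it simply cites Taylor's \emph{Partial Differential Equations I}, Chapter~4, Proposition~3.4. So your write-up is strictly more than what the paper provides. What this buys you is self-containment and some insight into \emph{why} compactness holds (either ``compactness is local and holds in charts'' or ``the spectrum runs off to infinity''), at the cost of having to be careful about the definition of $H^s(M)$ for non-integer or negative $s$ --- a point you correctly flag as the only real subtlety. The citation approach is shorter and appropriate for a paper whose focus lies elsewhere; your approach would be the right one in lecture notes or if the reader cannot be assumed to have Taylor at hand.
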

\begin{proof}
See \cite{taylor2010partial}, Chapter 4, Proposition 3.4.
\end{proof}
\begin{lemma}[Aubin-Lions]\label{aubin-lions}
If $B_0, B, B_1$ are Banach spaces with $B_0 \hookrightarrow B \hookrightarrow B_1$ continuous embeddings with the first one being compact, $1 < p,q, < \infty$, $0 < T < \infty$, and $B_0$ and $B_1$ reflexive, then the Banach space
\begin{align*}
    W:= \{u \in L^p(0,T;B_0): u'\in L^q(0,T;B_1)\}
\end{align*}
is compactly embedded into $L^p(0,T;B)$.
\end{lemma}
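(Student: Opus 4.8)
This is the classical Aubin--Lions lemma, and the plan is to reproduce its standard proof under the present hypotheses. The three ingredients are: (i) an Ehrling-type interpolation inequality coming from the compact embedding $B_0\hookrightarrow B$; (ii) weak compactness in the reflexive Bochner spaces $L^p(0,T;B_0)$ and $L^q(0,T;B_1)$; and (iii) a mollification-in-time argument that converts the a priori bound on $u'$ into strong compactness. I would start with (i): from compactness of $B_0\hookrightarrow B$ together with continuity of $B\hookrightarrow B_1$, a routine contradiction argument on a normalized sequence shows that for every $\eta>0$ there is $C_\eta>0$ with $\|x\|_{B}\le\eta\|x\|_{B_0}+C_\eta\|x\|_{B_1}$ for all $x\in B_0$; integrating the $p$-th power over $(0,T)$ yields the same inequality with $B_0,B,B_1$ replaced by $L^p(0,T;B_0),L^p(0,T;B),L^p(0,T;B_1)$.

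Next I would set up the weak compactness. Let $(u_n)$ be bounded in $W$. Since $B_0,B_1$ are reflexive and $1<p,q<\infty$, the spaces $L^p(0,T;B_0)$ and $L^q(0,T;B_1)$ are reflexive, so along a subsequence $u_n\rightharpoonup u$ in $L^p(0,T;B_0)$ and $u_n'\rightharpoonup w$ in $L^q(0,T;B_1)$; testing against scalar functions in $C_c^\infty(0,T)$ identifies $w=u'$, so $u\in W$. Replacing $u_n$ by $u_n-u$, I may assume $u_n\rightharpoonup 0$ in $L^p(0,T;B_0)$ and $u_n'\rightharpoonup 0$ in $L^q(0,T;B_1)$, and the goal reduces to $\|u_n\|_{L^p(0,T;B)}\to0$. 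By the integrated Ehrling inequality and the uniform bound on $\|u_n\|_{L^p(0,T;B_0)}$, it even suffices to prove the weaker claim $\|u_n\|_{L^p(0,T;B_1)}\to0$.

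The heart of the argument is this last claim. For $0<h<T$ I would introduce the time-averages $u_n^h(t):=\tfrac1h\int_t^{t+h}u_n(\sigma)\,\mathrm d\sigma$ (after extending $u_n$ past the endpoints of $[0,T]$ by reflection, which is harmless for the relevant norms). Writing $u_n(t)-u_n^h(t)=-\tfrac1h\int_t^{t+h}\int_t^{\sigma}u_n'(\rho)\,\mathrm d\rho\,\mathrm d\sigma$ and using Hölder's inequality with the conjugate exponent $q'$ of $q$ gives $\|u_n(t)-u_n^h(t)\|_{B_1}\le h^{1/q'}\|u_n'\|_{L^q(0,T;B_1)}$ uniformly in $t$, hence $\|u_n-u_n^h\|_{L^p(0,T;B_1)}\le C\,h^{1/q'}$ uniformly in $n$. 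For $h$ fixed, on the other hand, $\|u_n^h(t)\|_{B_0}\le h^{-1/p}\|u_n\|_{L^p(0,T;B_0)}$ is bounded uniformly in $t$ and $n$, while $u_n\rightharpoonup0$ in $L^p(0,T;B_0)$ gives $u_n^h(t)\rightharpoonup0$ in $B_0$ for each $t$ (using $(L^p(0,T;B_0))^\ast=L^{p'}(0,T;B_0^\ast)$, valid by reflexivity of $B_0$); compactness of $B_0\hookrightarrow B$ then forces $u_n^h(t)\to0$ strongly in $B$ for each $t$, with a bound uniform in $t$, so dominated convergence yields $\|u_n^h\|_{L^p(0,T;B)}\to0$ as $n\to\infty$. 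Combining, $\limsup_{n}\|u_n\|_{L^p(0,T;B_1)}\le C\,h^{1/q'}$ for every $h>0$, so $u_n\to0$ in $L^p(0,T;B_1)$; feeding this back into the Ehrling inequality gives $\limsup_{n}\|u_n\|_{L^p(0,T;B)}\le\eta\sup_n\|u_n\|_{L^p(0,T;B_0)}$ for every $\eta>0$, hence $u_n\to0$ in $L^p(0,T;B)$, which is the assertion.

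I expect the main obstacle to be exactly this strong convergence in $L^p(0,T;B_1)$: the bound on $u'$ controls only temporal oscillations, so one must split each $u_n$ into a time-mollified part --- which keeps the $B_0$-bound and can therefore be compressed by $B_0\hookrightarrow B$ pointwise in $t$ --- and a remainder that is small in $B_1$ uniformly in $n$ thanks to the derivative bound, and then make both estimates uniform in $n$. The care lies in the endpoint bookkeeping and in the order of limits ($n\to\infty$ before $h\to0$, then $\eta\to0$); the rest of the proof is soft functional analysis resting on reflexivity and the two embeddings.
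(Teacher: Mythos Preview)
Your argument is a correct rendition of the standard proof of the Aubin--Lions lemma: Ehrling's inequality from the compact embedding, weak compactness in the reflexive Bochner spaces, and the time-mollification splitting $u_n = (u_n - u_n^h) + u_n^h$ with the derivative bound controlling the first piece uniformly in $n$ and pointwise weak convergence plus compactness of $B_0\hookrightarrow B$ handling the second. The order of limits and the endpoint extension are handled as one would expect.

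The paper, however, does not prove this lemma at all: its ``proof'' consists solely of the citation ``See e.g.\ \cite{roubicek2006nonlinear}, Lemma 7.7.'' So there is nothing to compare on the mathematical side --- you have supplied a complete self-contained argument where the paper defers entirely to the literature. Your proof is in fact essentially the one found in the cited reference (Roub\'{i}\v{c}ek), so in that sense you and the paper agree on the underlying argument; you simply wrote it out.
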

\begin{proof}
See e.g. \cite{roubicek2006nonlinear}, Lemma 7.7.
\end{proof}
\begin{proposition}\label{opt_weakly_closed}
If Assumption \ref{assumption} holds, then the parameter-to-state operator $S: \mathcal{D}(S) \subset \mathcal{B} \to W(0,T)$ is weakly sequentially closed for any weakly closed domain $\mathcal{D}(S)$, i.e., for any sequence $(b_k) \subset \mathcal{D}(S)$ we have
\begin{align*}
    b_k \rightharpoonup \bar{b} \ \land  \ S(b_k) \rightharpoonup \bar{u} \text{ in }W(0,T) \implies \bar{b} \in \mathcal{D}(S) \ \land  \  S(\bar{b}) = \bar{u},
\end{align*}
where we call a set weakly closed iff it is closed w.r.t. the weak topology. In particular, (strongly) closed and convex sets fulfill that requirement \cite{yosida78}.
\end{proposition}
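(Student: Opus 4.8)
The plan is to run the standard compactness-and-limit argument for weak sequential closedness of parameter-to-state maps of this type: reduce the claim to identifying the weak limit $\bar u$ as $S(\bar b)$, extract strong space-time $L^2$-convergence of the states by an Aubin--Lions compactness argument, pass to the limit in the weak formulation \eqref{weak_formulation_complete}, and finish with the uniqueness part of Proposition~\ref{existence_uniqueness}. Since $\mathcal{D}(S)$ is weakly closed and $b_k \rightharpoonup \bar b$ in $\mathcal{B}$, we immediately have $\bar b \in \mathcal{D}(S)$, so $S(\bar b)$ is well defined and only $S(\bar b) = \bar u$ remains to be shown. Writing $u_k := S(b_k)$, each $u_k$ satisfies \eqref{weak_formulation_complete} (equivalently \eqref{model_weak}--\eqref{iv_weak}) with drift $b_k$, and since a weakly convergent sequence is bounded, $(b_k)$ is bounded in $\mathcal{B}$, hence bounded in $\hat{\mathcal{B}} = L^\infty(0,T;L^\infty(M;\mathbb{R}^3))$ by the continuous embedding, say by a constant $R$.

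The first substantive step is a compactness argument. I would invoke the Aubin--Lions lemma (Lemma~\ref{aubin-lions}) with $B_0 = V = H^1(M)$, $B = H = L^2(M)$, $B_1 = V^\ast$, and $p = q = 2$; its hypotheses hold because $H^1(M) \hookrightarrow L^2(M)$ is compact and both $V$ and $V^\ast$ are reflexive Hilbert spaces. This gives that $W(0,T) \hookrightarrow L^2(0,T;L^2(M))$ compactly. Because a compact linear embedding maps weakly convergent sequences to strongly convergent ones, $u_k \rightharpoonup \bar u$ in $W(0,T)$ then yields $u_k \to \bar u$ strongly in $L^2(0,T;L^2(M))$.

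The second step is to pass to the limit in \eqref{weak_formulation_complete}, term by term, against a fixed admissible test function $v$. The terms $-\int_0^T\!\int_M v' u_k$ and $\lambda \int_0^T\!\int_M (\nabla u_k, \nabla v)$ converge to the corresponding expressions in $\bar u$, since $u_k \rightharpoonup \bar u$ in $W(0,T)$ in particular gives $u_k \rightharpoonup \bar u$ in $L^2(0,T;H)$ and $\nabla u_k \rightharpoonup \nabla \bar u$ in $L^2(0,T;L^2(M))$, while $v'$ and $\nabla v$ are fixed $L^2$-functions; the right-hand side involves only the fixed data $f$ and $u_0$ and is unchanged. The delicate term is the drift contribution $\int_0^T\!\int_M u_k\,(b_k, \nabla v)$, where I would establish that $u_k b_k \rightharpoonup \bar u\,\bar b$ weakly in $L^2(0,T;L^2(M;\mathbb{R}^3))$. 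This sequence is bounded because $\|u_k b_k\|_{L^2} \le R\,\|u_k\|_{L^2(0,T;L^2(M))}$, so it suffices to test against a dense family of smooth vector fields $\phi$; for such $\phi$ one writes $\int u_k b_k \cdot \phi = \int b_k \cdot (u_k \phi)$ and combines $u_k \phi \to \bar u \phi$ strongly in $L^2$ (as $\phi$ is bounded) with $b_k \rightharpoonup \bar b$ weakly in $L^2((0,T)\times M;\mathbb{R}^3)$ --- the latter following from $b_k \rightharpoonup \bar b$ in $\mathcal{B}$ and the fact that the continuous embedding $\mathcal{B} \hookrightarrow L^\infty \hookrightarrow L^2$ is weak-to-weak continuous --- so that the pairing of a weakly and a strongly convergent sequence passes to the limit $\int \bar u\,\bar b \cdot \phi$. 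Boundedness in $L^2$ together with convergence on a dense set then gives $u_k b_k \rightharpoonup \bar u\,\bar b$ in $L^2$, hence the drift term converges to $\int_0^T\!\int_M \bar u\,(\bar b, \nabla v)$. Consequently $\bar u$ satisfies \eqref{weak_formulation_complete} with drift $\bar b$, and the uniqueness part of Proposition~\ref{existence_uniqueness} forces $\bar u = S(\bar b)$, completing the argument.

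I expect the main obstacle to be exactly the drift term $u_k b_k$: a priori both factors converge only weakly and no cancellation is available, so compactness has to be injected, and the Aubin--Lions lemma is what supplies the strong $L^2$-convergence of $u_k$ needed to close the argument. The remaining passages to the limit are routine; the only additional care points are verifying the Aubin--Lions hypotheses in the manifold setting (via the compact Sobolev embedding $H^1(M) \hookrightarrow L^2(M)$) and transferring weak convergence of $b_k$ from $\mathcal{B}$ down to $L^2$.
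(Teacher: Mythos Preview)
Your proposal is correct and follows essentially the same route as the paper: Aubin--Lions compactness to upgrade $u_k \rightharpoonup \bar u$ in $W(0,T)$ to strong $L^2((0,T)\times M)$-convergence, then passage to the limit in the weak formulation term by term. The only cosmetic difference is in the drift term: the paper pairs $b_k$ (weak-$\ast$ in $L^\infty$ via the embedding $\mathcal{B}\hookrightarrow\hat{\mathcal{B}}$) directly against $u_k\nabla v$ (strong in $L^1$), whereas you route through an $L^2$--$L^2$ pairing and a density argument to first get $u_k b_k \rightharpoonup \bar u\,\bar b$ in $L^2$; both are valid, the paper's is slightly more direct.
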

\begin{proof}
Let $(b_k) \subset \mathcal{D}(S) \subset \mathcal{B}$ be a weakly convergent sequence to $\bar{b}\in \mathcal{B}$ and $(S(b_k))=: (u_k)$ weakly convergent to $\bar{u}\in W(0,T)$. We observe that $\bar{b} \in \mathcal{D}(S)$ since the domain is weakly closed. We want to prove that $S(\bar{b}) = \bar{u}$.
With Lemma \ref{aubin-lions}, we obtain the compact inclusion $W(0,T) \hookrightarrow L^2((0,T)\times M)$. Passing to a subsequence, we can thus conclude
\begin{align*}
    u_k &\rightharpoonup \bar{u} \quad \text{weakly in }L^2(0,T;H^1(M)),\\
    u_k' &\rightharpoonup \bar{u}' \quad \text{weakly in }L^2(0,T;H^1(M)^\ast),\\
    u_k &\to \bar{u} \quad \text{strongly in }L^2((0,T)\times M).
\end{align*}
This yields directly the convergence of two of the terms in the weak formulation of the state equation \eqref{weak_formulation_complete} -- for $v \in L^2(0,T;H^1(M))$ with $v' \in L^2((0,T)\times M)$:
\begin{align*}
    \int_0^T \int_M v' u_k \dx x \dx t & \to \int_0^T \int_M v' \bar{u} \dx x \dx t,\\
    \int_0^T \int_M \lambda \nabla u_k \cdot \nabla v \dx x \dx t &\to \int_0^T \int_M \lambda \nabla \bar{u} \cdot \nabla v \dx x \dx t.
\end{align*}
What remains is to check the convergence of $\int_0^T \int_M u_k \, b_k \cdot \nabla v \dx x \dx t$.
We have $u_k \nabla v \to \bar{u}\nabla v$ strongly in $L^1((0,T)\times M;\mathbb{R}^3)$. Since $\mathcal{B}\hookrightarrow \mathcal{\hat{B}} = L^\infty(0,T;L^\infty(M))$ continuously, $(b_k)$ converges weakly also in $\hat{\mathcal{B}}$, and thus it converges also weak-$\ast$ in the latter.  Since a weakly-$\ast$-convergent sequence of functionals applied to a strongly convergent sequence is convergent, we conclude
\begin{align*}
    \int_0^T \int_M u_k \, b_k \cdot \nabla v \dx x \dx t \to \int_0^T \int_M \bar{u} \, \bar{b} \cdot \nabla v \dx x \dx t
\end{align*}
due to the weak-$\ast$-convergence of $(b_k)$ in $\hat{\mathcal{B}}$. Thus, $S(\bar{b}) = \bar{u}$ and the claim holds.
\end{proof}
\begin{corollary}
If $\mathcal{D}(F)$ is a weakly closed set, the forward operator $F$ is weakly closed in the setting of Prop. \ref{opt_weakly_closed}.
\end{corollary}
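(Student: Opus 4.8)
The idea is to read the corollary as a statement about the composition $F = G\circ S\circ\Gamma$ and to chain together the properties of the three factors: $\Gamma$ is bounded (Assumption~\ref{assumption}), $S$ is bounded and weakly sequentially closed (Propositions~\ref{estimates} and~\ref{opt_weakly_closed}), and $G$ is bounded and linear, hence weakly sequentially continuous. So I would fix a sequence $(p_k)\subset\mathcal{D}(F)$ with $p_k\rightharpoonup\bar p$ in $\mathcal{P}$ and $F(p_k)\rightharpoonup\bar y$ in $Y$; weak closedness of $\mathcal{D}(F)$ gives $\bar p\in\mathcal{D}(F)$ (so in particular $\Gamma(\bar p)\in\mathcal{D}(S)$, since $F(\bar p)$ is then defined), and it remains only to identify $F(\bar p)=\bar y$.

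First I would transport the weak convergence through $\Gamma$. Since $\Gamma$ is bounded, $(b_k):=(\Gamma(p_k))$ is bounded in $\mathcal{B}$, and I would use that $b_k\rightharpoonup\Gamma(\bar p)$ in $\mathcal{B}$ — equivalently weak-$\ast$ in $\hat{\mathcal{B}}=L^\infty(0,T;L^\infty(M;\mathbb{R}^3))$, which is exactly the mode of convergence exploited in the proof of Proposition~\ref{opt_weakly_closed}. This is the one delicate point: norm continuity of $\Gamma$ alone does not transport weak limits, so what is really being used here is that $\Gamma$ is weakly--weak-$\ast$ sequentially continuous. This should be understood as part of what ``continuous'' means for admissible parameterizations in Assumption~\ref{assumption}.2; in any case it holds automatically in all the concrete parameter choices of Section~\ref{sec:problems}, where $\Gamma$ is a bounded affine-linear map and a bounded linear map between Banach spaces is weakly continuous.

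With this at hand the rest is routine. By the a-priori estimate of Proposition~\ref{estimates}, applied to the drift $b_k$ and using that $\|b_k\|_{\hat{\mathcal{B}}}$ is uniformly bounded (so the constants there are uniformly bounded), the states $u_k:=S(b_k)$ are bounded in $W(0,T)$; since $W(0,T)$ is a Hilbert space, hence reflexive, we may pass to a subsequence with $u_{k_j}\rightharpoonup\bar u$ in $W(0,T)$. Now Proposition~\ref{opt_weakly_closed} applies along this subsequence: $b_{k_j}\rightharpoonup\Gamma(\bar p)$, $u_{k_j}=S(b_{k_j})\rightharpoonup\bar u$, and $\Gamma(\bar p)\in\mathcal{D}(S)$, whence $S(\Gamma(\bar p))=\bar u$. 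Finally $G$ is linear and continuous, so weakly sequentially continuous, giving $F(p_{k_j})=G(u_{k_j})\rightharpoonup G(\bar u)=G(S(\Gamma(\bar p)))=F(\bar p)$; comparing with $F(p_{k_j})\rightharpoonup\bar y$ and using uniqueness of weak limits in $Y$ yields $F(\bar p)=\bar y$, which is the claim. (One can replace the subsequence extraction by the observation that $G$ restricted to $W(0,T)$ is even compact, since it factors through the compact embedding $W(0,T)\hookrightarrow L^2((0,T)\times M)$ of Lemma~\ref{aubin-lions}, but this is not needed.) The only step beyond bookkeeping is the weak continuity of $\Gamma$ discussed above.
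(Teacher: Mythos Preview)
Your argument is correct and follows essentially the same route as the paper: invoke weak (sequential) continuity of $\Gamma$ and of the linear bounded $G$, together with the weak sequential closedness of $S$ from Proposition~\ref{opt_weakly_closed}, to conclude that the composition $F=G\circ S\circ\Gamma$ is weakly closed. Your careful remark that norm continuity of $\Gamma$ alone does not suffice and that weak continuity must be read into Assumption~\ref{assumption}.2 is well taken---indeed the paper's own one-line proof simply asserts ``$\Gamma$ is weakly continuous by Assumption~\ref{assumption}'', so you have correctly identified (and made explicit) the hypothesis actually being used.
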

\begin{proof}
    Since $\Gamma$ is weakly continuous by Assumption \ref{assumption} and $G$ is linear and bounded and thus also weakly continuous, the claim follows.
\end{proof}
\begin{remark}
    If $F$ is a mapping between separable Hilbert spaces, we observe that $F$ is weakly closed, continuous and compact since $G$ is compact. Then, we can invoke \cite[Prop. A3]{Engl_Kunisch_Neubauer} and conclude that the inverse problem for $F$ is locally ill-posed in $\operatorname{int}(\mathcal{D}(F))$. This makes employing regularization techniques necessary for reconstruction.
\end{remark}

\begin{proposition}
In the setting of Prop. \ref{opt_weakly_closed}, the minimization problem \eqref{eq:optimization} admits a solution $p \in \mathcal{P}_\mathrm{ad}$, assuming that $\mathcal{P}_\mathrm{ad}$ is bounded and weakly closed  (e.g., $\mathcal{P}_\mathrm{ad} = \overline{B_r(\hat{p})}$ for some $r>0$ and $\hat{p} \in \mathcal{P}$) provided that $J: L^2(0,T;\mathbb{R}^3) \times \mathcal{P} \to \mathbb{R}_+$ is weakly lower semicontinuous with respect to the input tuple.
\end{proposition}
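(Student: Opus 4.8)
The plan is to run the direct method of the calculus of variations, feeding in the a~priori bounds of Proposition~\ref{estimates} and the weak closedness of $F$. Since $J$ takes values in $\mathbb{R}_+$ and $\mathcal{P}_\mathrm{ad}$ is nonempty, the value $j:=\inf_{p\in\mathcal{P}_\mathrm{ad}}J(F(p),p)$ is a finite nonnegative real number; choose a minimizing sequence $(p_k)\subset\mathcal{P}_\mathrm{ad}$ with $J(F(p_k),p_k)\to j$.

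Next I would extract weakly convergent subsequences. As $\mathcal{P}$ is reflexive (Assumption~\ref{assumption}) and $\mathcal{P}_\mathrm{ad}$ is bounded, a subsequence of $(p_k)$ — not relabeled — converges weakly, $p_k\rightharpoonup\bar p$ in $\mathcal{P}$, and weak closedness of $\mathcal{P}_\mathrm{ad}$ gives $\bar p\in\mathcal{P}_\mathrm{ad}$. Since $\Gamma$ is bounded (Assumption~\ref{assumption}), the drift terms $b_k:=\Gamma(p_k)$ are bounded in $\mathcal{B}\hookrightarrow\hat{\mathcal{B}}$; hence by the a~priori estimate \eqref{estimate_W} the states $u_k:=S(b_k)$ are bounded in $W(0,T)$, which is reflexive (a Hilbert space for the choice $V=H^1(M)$). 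As $G$ is linear and continuous, $F(p_k)=Gu_k$ is then bounded in $Y=L^2(0,T;\mathbb{R}^3)$, so after passing to a further subsequence $F(p_k)\rightharpoonup\bar y$ in $Y$ for some $\bar y\in Y$.

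It remains to identify $\bar y$ and to pass to the limit in the cost. By the Corollary following Proposition~\ref{opt_weakly_closed} — applied with $\mathcal{D}(F)=\mathcal{P}_\mathrm{ad}$, which is weakly closed and on which $F$ is well defined by Proposition~\ref{existence_uniqueness} — the forward operator $F$ is weakly sequentially closed, so $p_k\rightharpoonup\bar p$ together with $F(p_k)\rightharpoonup\bar y$ forces $F(\bar p)=\bar y$. Consequently $(F(p_k),p_k)\rightharpoonup(F(\bar p),\bar p)$ in $Y\times\mathcal{P}$, and weak lower semicontinuity of $J$ in the input tuple yields
\[
 J(F(\bar p),\bar p)\;\le\;\liminf_{k\to\infty}J(F(p_k),p_k)\;=\;j,
\]
so $\bar p\in\mathcal{P}_\mathrm{ad}$ attains the infimum and hence solves \eqref{eq:optimization}.

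The proposition itself is a short compactness-and-lower-semicontinuity argument; its entire substance sits in the ingredients already assembled. The step I expect to be the crux is the passage to a weakly convergent subsequence of states: it needs the uniform bound on $\|u_k\|_{W(0,T)}$, available only because Proposition~\ref{estimates} makes the constants depend continuously on $\|b_k\|_{\hat{\mathcal{B}}}$ and $\Gamma$ is \emph{bounded}, together with reflexivity of $W(0,T)$; the genuinely nontrivial analytic input — weak sequential closedness of $S$, proved via Aubin-Lions compactness and the weak-$\ast$ treatment of the nonlinear drift $u\,b$ in Proposition~\ref{opt_weakly_closed} — is merely invoked here. Nesting the two subsequence extractions and the stability of $J(F(p_k),p_k)\to j$ under passing to a subsequence are both routine.
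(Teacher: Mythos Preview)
Your proof is correct and follows essentially the same route as the paper's: the direct method of the calculus of variations, using reflexivity and boundedness of $\mathcal{P}_\mathrm{ad}$ to extract a weakly convergent subsequence of parameters, the a~priori estimate \eqref{estimate_W} together with boundedness of $\Gamma$ to bound the states and hence $F(p_k)$, weak sequential closedness of $F$ to identify the limit, and weak lower semicontinuity of $J$ to conclude. Your write-up is in fact slightly more careful than the paper's (you explicitly invoke reflexivity of $\mathcal{P}$ and of $W(0,T)$, and the linearity/continuity of $G$), but the structure is identical.
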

\begin{proof}
Since $J$ is bounded from below by 0, there exists a minimizing sequence $(y_k, p_k) = (F(p_k),p_k) \subset L^2(0,T;\mathbb{R}^3) \times \mathcal{P}_\mathrm{ad}$ such that 
\begin{align*}
    J(y_k, p_k) \to \inf_{p \in \mathcal{P}_\mathrm{ad}} J(F(p),p).
\end{align*}
Since $\mathcal{P}_\mathrm{ad}$ is bounded, so is $(p_k)$, so it admits a weakly convergent subsequence $p_k \rightharpoonup \bar{p} \in \mathcal{P}$. Since boundedness of $(p_k)$ in $\mathcal{P}$ implies boundedness of $(b_k) = (\Gamma(p_k))$ in $\mathcal{B}$ due to the boundedness of $\Gamma$, estimate \eqref{estimate_W} ensures boundedness of $(y_k)$ in $L^2(0,T;\mathbb{R}^3)$, i.e., we get by extracting a subsequence
\begin{align*}
    F(p_k) = y_k \rightharpoonup \bar{y} \in L^2(0,T;\mathbb{R}^3).
\end{align*}
Since $J$ is assumed to be weakly lower semicontinuous, we obtain
\begin{align*}
    J(\bar{y},\bar{p}) \leq \liminf_{k \to \infty} J(y_k,p_k).
\end{align*}
Weak sequential closedness of $F$ ensures that $\bar{p} \in \mathcal{P}_\mathrm{ad}$ and $F(\bar{p}) = \bar{y}$, proving the claim.
\end{proof}

Next, we employ the a-priori-estimates proven in Prop. \ref{estimates} in order to prove the differentiability of the parameter-to-state map $S$.

\begin{proposition}
The parameter-to-state operator $S$ is differentiable with respect to the topology of $L^2(0,T;H^2(M))$ in the sense that for $b, b + h \in \Lambda \subset \mathcal{B}$ for a bounded set $\Lambda$, it holds
\begin{align*}
    \frac{\| S(b+h) - S(b) - S'(b)h \|_{W(0,T)}}{\| h \|_{L^2(0,T;H^2(M))}} \to 0 \quad \text{as } \|h\|_{L^2(0,T;H^2(M))} \to 0,
\end{align*}
where $S'(b)h =: v$ solves
\begin{align*}
    v' + A(b)v &= -\divg_M(S(b) h)\\
    v(0) = 0.
\end{align*}
Here, the $\divg$ operator is to be understood in the distributional sense.
\end{proposition}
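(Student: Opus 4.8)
The plan is to follow the classical three-step route to Fr\'echet differentiability. \emph{Step 1 (the linearized operator).} Writing $u:=S(b)$, I first check that the right-hand side $-\divg_M(u\,h)$ of the linearized equation lies in $L^2(0,T;V^\ast)$ for every increment $h\in L^2(0,T;H^2(M))$. Interpreting $\divg_M$ distributionally and testing against $\varphi\in V=H^1(M)$ on the boundaryless manifold $M$, one has $|\langle\divg_M(u(t)h(t)),\varphi\rangle_{V^\ast,V}|\le\|u(t)\|_{L^2(M)}\|h(t)\|_{L^\infty(M)}\|\nabla\varphi\|_{L^2(M)}$, and since $W(0,T)\hookrightarrow C([0,T];L^2(M))$ while $H^2(M)\hookrightarrow L^\infty(M)$ (the latter because $M$ is two-dimensional, which is exactly why $h$ is measured in the $L^2(0,T;H^2(M))$-norm), this yields $\|\divg_M(u\,h)\|_{L^2(0,T;V^\ast)}\le C\|u\|_{L^\infty(0,T;L^2(M))}\|h\|_{L^2(0,T;H^2(M))}\le C\|u\|_{W(0,T)}\|h\|_{L^2(0,T;H^2(M))}$. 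As $A(\cdot\,;b)$ satisfies the continuity and coercivity estimates of Lemma~\ref{continuity_coercivity}, Proposition~\ref{existence_uniqueness} applied with this right-hand side and zero initial value produces a unique $v=:S'(b)h\in W(0,T)$, and the a-priori bound \eqref{estimate_W} gives $\|S'(b)h\|_{W(0,T)}\le C(\|b\|_{\hat{\mathcal B}})\|u\|_{W(0,T)}\|h\|_{L^2(0,T;H^2(M))}$; linearity of $h\mapsto v$ is immediate from linearity of the initial-value problem in its data, so $S'(b)\in\mathcal L(L^2(0,T;H^2(M)),W(0,T))$.

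\emph{Step 2 (increment equations and a Lipschitz bound).} Put $u_h:=S(b+h)$, $z:=u_h-u$ and $w:=u_h-u-v$. Subtracting the weak formulations \eqref{model_weak} for $u_h$ and $u$ and using that $A(\cdot\,;b)$ is affine in the drift slot shows that $z$ is the unique $W(0,T)$-solution of $z'+A(\cdot\,;b)z=-\divg_M(u_h\,h)$, $z(0)=0$ (signs as in the statement). Applying \eqref{estimate_W} together with the product estimate of Step~1, now with $u_h$ in place of $u$, and noting that the constant in \eqref{estimate_W} depends continuously on $\|b\|_{\hat{\mathcal B}}$ while $\|u_h\|_{W(0,T)}\le C_\Lambda$ — both uniformly over $b,b+h\in\Lambda$ since $\Lambda$ is bounded in $\mathcal B\hookrightarrow\hat{\mathcal B}$ — gives the Lipschitz estimate $\|S(b+h)-S(b)\|_{W(0,T)}=\|z\|_{W(0,T)}\le C_\Lambda\|h\|_{L^2(0,T;H^2(M))}$. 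Subtracting in addition the equation for $v$, the first-order drift terms cancel and $w$ solves $w'+A(\cdot\,;b)w=-\divg_M(z\,h)$, $w(0)=0$.

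\emph{Step 3 (the remainder).} Applying \eqref{estimate_W} to $w$ and the product estimate once more, $\|w\|_{W(0,T)}\le C_\Lambda\|z\,h\|_{L^2(0,T;L^2(M))}\le C_\Lambda\|z\|_{L^\infty(0,T;L^2(M))}\|h\|_{L^2(0,T;L^\infty(M))}\le C_\Lambda\|z\|_{W(0,T)}\|h\|_{L^2(0,T;H^2(M))}$, and inserting the Lipschitz bound of Step~2 gives $\|w\|_{W(0,T)}\le C_\Lambda^{2}\|h\|_{L^2(0,T;H^2(M))}^{2}$; dividing by $\|h\|_{L^2(0,T;H^2(M))}$ yields the stated limit. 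The main — and essentially the only nonroutine — obstacle is the treatment of the bilinear term $u\,h$: integrability has to be distributed carefully between time and space so that $\divg_M(u\,h)$, and likewise $\divg_M(z\,h)$ in the remainder step, lies in $L^2(0,T;V^\ast)$ with norm controlled by the $H^2$-in-space, $L^2$-in-time norm of $h$, which is precisely where $W(0,T)\hookrightarrow C([0,T];L^2(M))$ and the Sobolev embedding $H^2(M)\hookrightarrow L^\infty(M)$ are used; with that in place, the rest follows directly from the energy estimates already established in Propositions~\ref{existence_uniqueness} and \ref{estimates}.
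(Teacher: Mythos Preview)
Your proof is correct and follows essentially the same route as the paper: both arguments set up the linearized equation for $v$, derive the increment equation for $z=\hat u_h=S(b+h)-S(b)$ and the remainder equation for $w=z-v$, then iterate the a-priori bound \eqref{estimate_W} together with the product estimate $\|\divg_M(\phi\,h)\|_{L^2(0,T;V^\ast)}\le\|\phi\|_{L^\infty(0,T;H)}\|h\|_{L^2(0,T;L^\infty(M))}$ to obtain $\|w\|_{W(0,T)}\le C_\Lambda\|h\|^2$. The only cosmetic difference is that the paper carries the $L^2(0,T;L^\infty(M))$-norm of $h$ throughout and invokes $H^2(M)\hookrightarrow L^\infty(M)$ at the very end, whereas you use this embedding from the outset.
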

\begin{proof}
First, the a priori estimates in Prop. \ref{estimates} yield
\begin{align*}
    \|v\|^2_{W(0,T)} \leq C \| \divg_M(S(b)h)\|^2_{L^2(0,T;V')}.
\end{align*}
The constant $C$ depends on the $L^\infty$-norm of $b$. Since $b$ is contained in a bounded subset of $\mathcal{B}$ which is continuously embedded in $\mathcal{\hat{B}}$, $C$ can be taken independently of $b$.

Since, for $\xi \in L^2(0,T;V)$, we have
\begin{align*}
    \langle \divg_M(S(b)h), \xi \rangle_{L^2(0,T;V'), L^2(0,T;V)} &= \int_0^T \int_M S(b) h \cdot \nabla \xi \dx x \dx t\\
    &\leq \| S(b) h\|_{L^2((0,T)\times M)} \| \xi \|_{L^2(0,T;V)},
\end{align*}
we can estimate
\begin{align*}
    \| \divg_M(S(b) h) \|^2_{L^2(0,T;V')} & \leq \| S(b) h \|^2_{L^2((0,T)\times M)}\\
    &\leq \|S(b)\|^2_{L^\infty(0,T;H)} \, \| h \|^2_{L^{2}(0,T;L^\infty(M))},\\
    &\leq \|S(b)\|^2_{W(0,T)} \|h\|^2_{L^{2}(0,T;L^\infty(M))}.
\end{align*}
since $W(0,T) \hookrightarrow L^\infty(0,T;H)$.

Consider $\hat{u}_h := S(b+h)-S(b)$. This function solves
\begin{align*}
    \hat{u}_h + A(b)\hat{u}_h &= -\divg(S(b+h)h)\\
    \hat{u}_h(0) &= 0.
\end{align*}
The a-priori estimate yields
\begin{align*}
    \|\hat{u}_h\|^2_{W(0,T)} &\leq C(b) \|S(b+h)\|^2_{W(0,T)} \|h\|^2_{L^{2}(0,T;L^\infty(M))}\\
    &\leq C(b, b+h) \|u_0\|^2_H \|h\|^2_{L^{2}(0,T;L^\infty(M))}.
\end{align*}
Lastly, $w = \hat{u}_h - v $ solves 
\begin{align*}
    w' + A(b) w &= -\divg (\hat{u}_h h),\\
    w(0) = 0;
\end{align*}
With this, we can estimate
\begin{align*}
    \|w\|^2_{W(0,T)} &\leq C(b,b+h) \|u_0\|^2_H  \|h\|^4_{L^{2}(0,T;L^\infty(M))}.
\end{align*}
Since $b, b+h \in \Lambda$, $C$ can be bounded uniformly in $(b, b+h)$ and thus we obtain
\begin{align*}
    \frac{\|S(b+h)-S(b)-v\|_{W(0,T)}}{\|h\|_{L^{2}(0,T;L^\infty(M))}} \to 0 \quad \text{ as } \|h\|_{L^{2}(0,T;L^\infty(M))} \to 0.
\end{align*}
Since $H^2(M) \hookrightarrow L^\infty(M)$, we obtain the same result for $h \in \Lambda \cap L^2(0,T;H^2(M))$.
\end{proof}
\begin{corollary}
    As shown in the final part of the proof, the above result holds also with respect to the topology of the space $L^2(0,T;L^\infty(M))$.
\end{corollary}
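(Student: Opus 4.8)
The plan is to observe that the proof of the preceding proposition has, in effect, already proved this: every intermediate estimate in it is carried out with $\|h\|_{L^2(0,T;L^\infty(M))}$ rather than $\|h\|_{L^2(0,T;H^2(M))}$, and the passage to the $H^2$-scale in its final line is made only to match the statement of the proposition. So the proof of the corollary will consist in re-reading those estimates, checking that nothing is lost, and adding the (automatic) observations that $h\in\Lambda\subset\mathcal{B}\hookrightarrow\hat{\mathcal{B}}=L^\infty(0,T;L^\infty(M))\hookrightarrow L^2(0,T;L^\infty(M))$, so that the difference quotient in the $L^2(0,T;L^\infty(M))$-norm is meaningful, and that $S'(b)$ extends to a bounded linear map $L^2(0,T;L^\infty(M))\to W(0,T)$.

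First I would isolate the one place where the norm of $h$ enters, namely the bound on the right-hand side $-\divg_M(S(b)h)$ of the linearized equation. Hölder's inequality in the spatial variable gives $\|S(b)(t)\,h(t)\|_{L^2(M)}\le\|S(b)(t)\|_{L^2(M)}\,\|h(t)\|_{L^\infty(M)}$, and integrating in $t$ yields
\begin{align*}
\|\divg_M(S(b)h)\|_{L^2(0,T;V^\ast)}\le\|S(b)h\|_{L^2((0,T)\times M)}\le\|S(b)\|_{L^\infty(0,T;H)}\,\|h\|_{L^2(0,T;L^\infty(M))}.
\end{align*}
With $W(0,T)\hookrightarrow L^\infty(0,T;H)$ and the a priori bound \eqref{estimate_W} this shows that $v=S'(b)h$ is well defined for $h\in L^2(0,T;L^\infty(M))$ and that $\|v\|_{W(0,T)}\le C(b)\,\|S(b)\|_{W(0,T)}\,\|h\|_{L^2(0,T;L^\infty(M))}$. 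Then I would run the same reasoning for $\hat u_h=S(b+h)-S(b)$, which solves $\hat u_h'+A(b)\hat u_h=-\divg_M(S(b+h)h)$, $\hat u_h(0)=0$, and then for $w=\hat u_h-v$, which solves $w'+A(b)w=-\divg_M(\hat u_h h)$, $w(0)=0$, obtaining $\|\hat u_h\|_{W(0,T)}\lesssim\|h\|_{L^2(0,T;L^\infty(M))}$ and finally $\|w\|_{W(0,T)}\lesssim\|h\|_{L^2(0,T;L^\infty(M))}^2$. The implied constants depend only on $\|u_0\|_H$ and on $\|b\|_{\hat{\mathcal{B}}},\|b+h\|_{\hat{\mathcal{B}}}$; since $b,b+h\in\Lambda$ and $\Lambda$ is bounded in $\mathcal{B}$, they may be taken uniform in $(b,b+h)$, say equal to $C_\Lambda$.

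Dividing the last bound by $\|h\|_{L^2(0,T;L^\infty(M))}$ then gives
\begin{align*}
\frac{\|S(b+h)-S(b)-S'(b)h\|_{W(0,T)}}{\|h\|_{L^2(0,T;L^\infty(M))}}=\frac{\|w\|_{W(0,T)}}{\|h\|_{L^2(0,T;L^\infty(M))}}\le C_\Lambda\,\|h\|_{L^2(0,T;L^\infty(M))}\longrightarrow 0
\end{align*}
as $\|h\|_{L^2(0,T;L^\infty(M))}\to0$, which is precisely Fréchet differentiability of $S$ with respect to the $L^2(0,T;L^\infty(M))$-topology. Finally I would note that this contains the proposition as a special case, since $\dim M\le 3$ gives the embedding $H^2(M)\hookrightarrow L^\infty(M)$ and hence $\|h\|_{L^2(0,T;L^\infty(M))}\le c\,\|h\|_{L^2(0,T;H^2(M))}$. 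I do not expect a genuine obstacle: the argument is bookkeeping, and the only step deserving a word of care is ensuring uniformity of the constants, which depend continuously on $\|b\|_{\hat{\mathcal{B}}}$ through Proposition \ref{estimates} — handled exactly as in the proof of the proposition by using boundedness of $\Lambda$ in $\mathcal{B}$ and the continuity of $\mathcal{B}\hookrightarrow\hat{\mathcal{B}}$.
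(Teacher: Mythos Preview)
Your proposal is correct and follows exactly the paper's approach: the proof of the preceding proposition already carries out every estimate in the $L^2(0,T;L^\infty(M))$-norm of $h$, and the corollary merely records this fact (the paper's own proof of the corollary is literally the parenthetical ``as shown in the final part of the proof''). Your write-up spells out the bookkeeping in more detail than the paper does, but the argument is identical.
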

\begin{remark}
Since $\hat{\mathcal{B}} = L^\infty(0,T;L^\infty(M)) \hookrightarrow L^2(0,T;L^\infty(M))$, we conclude that $S$ is a Fréchet differentiable operator on $\mathcal{B}$ due to its continuous embedding into $\mathcal{\hat{B}}$.
\end{remark}
\begin{remark}
If $\Gamma : \mathcal{P} \to \mathcal{B}$ is Fréchet differentiable, we obtain that 
\begin{align*}
    G \circ S \circ \Gamma = F: \mathcal{P} \to Y = L^2(0,T;\mathbb{R}^3)
\end{align*}
is Fréchet differentiable due to the chain rule and the linearity and boundedness of $G$.
\end{remark}
In order to apply the Landweber method to the forward operator $F$, we require the Hilbert space adjoint to its Fréchet derivative $F'$ \cite{Kaltenbacher_2017}.
\begin{proposition}\label{hilbert_space_adjoint}
The Hilbert space adjoint of the derivative of $F = G\circ S\circ \Gamma$ at $p\in \mathcal{P}$, $F'(p)^\star: L^2(0,T;\mathbb{R}^3) \to \mathcal{P}$, is given as 
\begin{align*}
    F'(p)^\star z &= E_{\mathcal{P}}^{-1} \Gamma'(p)^\ast S(p) \nabla \psi^z,
\end{align*}
where $\Gamma'(p)^\ast$ is the Banach space adjoint of $\Gamma'(p)$, and $\psi^z$ solves the adjoint equation
\begin{equation}
\begin{aligned}\label{adj_eq}
    -\frac{\partial}{\partial t}\psi^z &= \lambda \Delta \psi^z + b \cdot \nabla \psi^z + G^\ast z,\\
    \psi^z(T) &= 0.
\end{aligned}
\end{equation}
Here, $E_{\mathcal{P}} : \mathcal{P} \to \mathcal{P}^\ast$ is the Riesz isomorphism between $\mathcal{P}$ and $\mathcal{P}^\ast$.
\end{proposition}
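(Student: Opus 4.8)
The plan is to use the chain rule together with the adjoint-state method for the linearised state equation. Under the (implicit) hypothesis that $\Gamma$ is Fr\'{e}chet differentiable, needed for $\Gamma'(p)$ to be defined at all, $F=G\circ S\circ\Gamma$ is Fr\'{e}chet differentiable with $F'(p)=G\,S'(b)\,\Gamma'(p)$ for $b=\Gamma(p)$, by the chain rule and the linearity and boundedness of $G$; hence the Banach-space adjoint factors as $F'(p)^\ast=\Gamma'(p)^\ast\,S'(b)^\ast\,G^\ast$, and, because $Y=L^2(0,T;\mathbb R^3)$ is a Hilbert space and $E_{\mathcal P}$ is the Riesz isomorphism (which tacitly strengthens Assumption \ref{assumption}(3) to $\mathcal P$ being Hilbertian), the Hilbert-space adjoint is $F'(p)^\star=E_{\mathcal P}^{-1}F'(p)^\ast$. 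Everything therefore reduces to identifying $S'(b)^\ast G^\ast z\in\mathcal B^\ast$, i.e.\ to rewriting $\langle G\,S'(b)g,z\rangle_Y$, for an arbitrary $g\in\mathcal B$, as a duality pairing of $g$ with a fixed element of $\mathcal B^\ast$.

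For this I would introduce the adjoint state. Set $u:=S(b)$ and $v:=S'(b)g$, so that $v$ solves $v'+A(\cdot;b)v=-\divg_M(ug)$, $v(0)=0$, in the distributional sense. Let $\psi^z$ solve the backward problem $-(\psi^z)'+A(\cdot;b)^\ast\psi^z=G^\ast z$, $\psi^z(T)=0$, where $A(t;b)^\ast\colon V\to V^\ast$ is the adjoint of $A(t;b)$, i.e.\ $\langle A(t;b)^\ast\psi,w\rangle_{V^\ast,V}=\int_M\bigl(\lambda(\nabla\psi,\nabla w)_M+w\,(b,\nabla\psi)_M\bigr)\dx x$, and $G^\ast z\in L^2(0,T;H)$ is, in view of \eqref{eq:expectation}, the function $(G^\ast z)(x,t)=x\cdot z(t)$; in PDE form this backward problem is \eqref{adj_eq}. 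Existence and uniqueness of $\psi^z\in W(0,T)$ follow from the theory behind Proposition \ref{existence_uniqueness} after the time reversal $\tau=T-t$: it applies to $A(\cdot;b)^\ast$ because the continuity and coercivity bounds of Lemma \ref{continuity_coercivity}, including \eqref{eq:garding}, hold for it with the same constants---its first-order term is handled by exactly the Young-inequality step in that proof, and no derivative of $b$ is needed. Proposition \ref{estimates} then gives $\|\nabla\psi^z\|_{L^2(0,T;H)}\le\|\psi^z\|_{L^2(0,T;V)}\le C\|z\|_Y$.

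The core step is a symmetric testing argument: test the equation for $v$ with $\psi^z\in L^2(0,T;V)$ and the equation for $\psi^z$ with $v\in L^2(0,T;V)$, subtract, use $\langle A(t;b)^\ast\psi^z,v\rangle=\langle A(t;b)v,\psi^z\rangle$, and apply the $W(0,T)$ integration-by-parts identity, whose boundary contribution $(v(T),\psi^z(T))_H-(v(0),\psi^z(0))_H$ vanishes since $v(0)=0$ and $\psi^z(T)=0$. This yields
\begin{align*}
\langle G\,S'(b)g,z\rangle_Y
&=\int_0^T\langle G^\ast z,v\rangle\dx t\\
&=\int_0^T\langle -\divg_M(ug),\psi^z\rangle\dx t\\
&=\int_0^T\int_M u\,(g,\nabla\psi^z)_M\dx x\dx t,
\end{align*}
the last equality being the definition of the distributional divergence on the boundaryless $M$. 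By Cauchy--Schwarz, $u\,\nabla\psi^z\in L^2(0,T;L^1(M;\mathbb R^3))=\bigl(L^2(0,T;L^\infty(M;\mathbb R^3))\bigr)^\ast$, and since $\mathcal B\hookrightarrow\hat{\mathcal B}\hookrightarrow L^2(0,T;L^\infty(M;\mathbb R^3))$ the right-hand side equals $\langle g,\,S(p)\nabla\psi^z\rangle_{\mathcal B,\mathcal B^\ast}$ with $S(p)\nabla\psi^z:=u\,\nabla\psi^z\in\mathcal B^\ast$; that is, $S'(b)^\ast G^\ast z=S(p)\nabla\psi^z$.

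Finally, inserting $g=\Gamma'(p)h$ for $h\in\mathcal P$,
\begin{align*}
\langle F'(p)h,z\rangle_Y
&=\langle\Gamma'(p)h,\,S(p)\nabla\psi^z\rangle_{\mathcal B,\mathcal B^\ast}\\
&=\langle h,\,\Gamma'(p)^\ast S(p)\nabla\psi^z\rangle_{\mathcal P,\mathcal P^\ast}\\
&=\bigl(h,\,E_{\mathcal P}^{-1}\Gamma'(p)^\ast S(p)\nabla\psi^z\bigr)_{\mathcal P},
\end{align*}
which is the claimed formula. The principal obstacle is not this formal computation but the functional-analytic care it rests on: establishing $\psi^z\in W(0,T)$ so that $\nabla\psi^z$ is a genuine $L^2$ vector field, justifying the time integration by parts for the pair $v,\psi^z\in W(0,T)$, and---most delicately---checking that the low-regularity product $u\,\nabla\psi^z$ truly defines a bounded functional on $\mathcal B$, so that $\Gamma'(p)^\ast$ may legitimately be applied to it; one must also keep the identifications of $Y$, $\mathcal P$, and $\mathcal B$ with (subspaces of) their duals consistent throughout.
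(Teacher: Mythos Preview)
Your derivation is correct and is precisely the adjoint-state computation that underlies the result; the paper, however, does not spell this out at all---its entire proof is a citation to \cite{Nguyen_2019} and references therein, together with the remark that the Riesz map $E_{\mathcal P}$ is needed because $\mathcal P$ is not identified with $\mathcal P^\ast$. So your approach is not different in spirit, but it is far more explicit: you carry out in full the chain-rule factorisation $F'(p)^\ast=\Gamma'(p)^\ast S'(b)^\ast G^\ast$, the well-posedness of the backward problem for $\psi^z$ via Lemma~\ref{continuity_coercivity} applied to $A(\cdot;b)^\ast$, the symmetric testing and integration-by-parts argument in $W(0,T)$, and the identification of $u\nabla\psi^z$ as an element of $\mathcal B^\ast$. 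You also make explicit two assumptions that the paper leaves implicit in this proposition, namely that $\Gamma$ must be Fr\'echet differentiable (stated only in a later remark) and that $\mathcal P$ must be Hilbert for $E_{\mathcal P}$ to be the Riesz isomorphism (Assumption~\ref{assumption} only demands reflexivity). What your self-contained argument buys is transparency about exactly where each regularity ingredient is used; what the paper's citation buys is brevity, at the cost of deferring all of this to the literature.
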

\begin{proof}
See \cite{Nguyen_2019} and the references therein, applied to the composition of $S$ with $\Gamma$. Here, we do not identify $\mathcal{P}$ with $\mathcal{P}^\ast$ due to the different inner products, which is why we require the Riesz isomorphism. 
\end{proof}
\begin{remark}
If we choose $\mathcal{P} =  H^1(0,T;H^2(M))$, we obtain for the Riesz isomorphism
$E_\mathcal{P}^{-1}: \mathcal{P}^\ast \to \mathcal{P}$ that $u = E^{-1}_\mathcal{P} f$ can be computed by solving two simple linear equations:
\begin{align*}
    v &\in \mathcal{P}: \quad v - v'' = f, \quad v'(0) = v'(T) = 0 \quad \text{and }\\
    u &\in \mathcal{P}: \quad u - \Delta u + \Delta^2 u = v.
\end{align*}
For $\mathcal{P} = L^2(0,T;H^2(M))$, only the second equation has to be solved for $v=f$. For $\mathcal{P} = H^1(0,T, \mathbb{R}^3)$, only the first equation has to be solved and $u = v$.
\end{remark}
We note that, in practice, the strength of the smoothing in time for the Riesz isomorphism can be controlled by rescaling time or, equivalently, equipping the space $H^1(0,T; H^2(M))$ with the equivalent norm
\begin{align*}
    \norm{u}_{H^1_\varepsilon(0,T;H^2(M))} := \norm{u}_{H^2(M)} + \varepsilon\norm{u'}_{H^2(M)}
\end{align*}
for $\varepsilon > 0$, and analogously for the smoothing in space.

\section{Selected problems for nanoparticle dynamics in MPI}\label{sec:problems}
In order to apply the theory to the Fokker-Planck equation for the dynamic behavior of an ensemble of magnetic nanoparticles in an applied magnetic field, we define the general form of the Fokker-Planck equation for an ensemble of particles subjected to N\'{e}el relaxation. Here, the independent variable $m$ (denoted as $x$ for the general case in the previous section) denotes the possible directions of a nanoparticle's magnetic moment vector. Thus, the manifold $M$ is always chosen to be the two-dimensional unit sphere $S^2$, which is a smooth, compact Riemannian manifold without boundary such that the previously derived theory applies. The drift term $b$ in equation \eqref{model_original} then reads \cite{Kluth:2017}:
\begin{align*}
    b(m,t) &= \alpha_1 (m \times H_\mathrm{app}) \times m + \alpha_2 (m \times \varphi (m,t) ) \times m,
\end{align*}
for $\alpha_1, \alpha_2>0 $ physical constants, $H_\mathrm{app}:[0,T] \to \mathbb{R}^3$ the applied magnetic field and $\varphi: S^2 \times [0,T] \to \mathbb{R}^3$ is what we call the anisotropy landscape of the nanoparticles. More precisely, $\varphi$ is the derivative of the anisotropy energy of a particle with respect to the magnetic moment direction, excluding constant factors which are assumed to be given as $\alpha_2$. Assuming uniaxial anisotropy, we have $\varphi(m,t) = (m^Tn)n $ with $n = n(t) \in S^2$ the anisotropy direction, and $\alpha_2 = 2\tilde{\gamma}\hat{\alpha}\frac{K^\mathrm{anis}}{M_S}$, where $\hat{\alpha}$ is a damping factor, $\tilde{\gamma}:= \gamma/(1+\hat{\alpha}^2)$ is the modified gyromagnetic ratio with the gyromagnetic ratio $\gamma$, $M_S$ is the saturation magnetization and $K^\mathrm{anis}$ is the uniaxial anisotropy constant. The other physical constant is given as $\alpha_1 = \tilde{\gamma}\hat{\alpha}\mu_0$, where $\mu_0$ is the vacuum permeability \cite{Albers_toolbox}. For further reading on the particle physics, see \cite{binns_tutorial, Neel1953, stoner1991mechanism}.

In this context, we have in particular three possible parameters $p$ that we may want to identify from measurements:
\begin{enumerate}
    \item Identify time-dependent $p:= H_\mathrm{app}$; then $\mathcal{P} = H^1(0,T;\mathbb{R}^3) \hookrightarrow L^\infty(0,T;\mathbb{R}^3)$ and $\mathcal{B} = \mathcal{\hat{B}}$.
    \item Identify space-time dependent $p:= \varphi$, with $\mathcal{P} = H^1(0,T;H^2(S^2;\mathbb{R}^3)) \hookrightarrow L^\infty(0,T;L^\infty(S^2;\mathbb{R}^3))$ and $\mathcal{B} = \hat{\mathcal{B}}$. 
    \item Assume uniaxial anisotropy, i.e., $\varphi(m,t) = (m^T n(t)\,)\,n(t)$, and identify time-dependent $p:= n$, where $\mathcal{P} = H^1(0,T;\mathbb{R}^3) \hookrightarrow L^\infty(0,T;\mathbb{R}^3)$ and $\mathcal{B} = \hat{\mathcal{B}}$.
\end{enumerate}
\begin{lemma}
    For each of the choices of $(p, \mathcal{P})$ defined above, the resulting forward operator $F: \mathcal{P} \to Y$ is Fréchet differentiable.
\end{lemma}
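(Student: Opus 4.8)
The plan is to reduce the statement to the Fréchet differentiability of the parameter-to-drift map $\Gamma$ in each of the three cases, and then invoke the chain rule together with the facts already established above: $S:\mathcal{B}\to W(0,T)$ is Fréchet differentiable on bounded subsets of $\mathcal{B}=\hat{\mathcal{B}}$ (with the remainder controlled in the $L^2(0,T;L^\infty(M))$-topology, as in the corollary), and $G$ is linear and bounded, hence smooth. Consequently $F=G\circ S\circ\Gamma$ inherits Fréchet differentiability from $\Gamma$, with $F'(p)=G\,S'(\Gamma(p))\,\Gamma'(p)$. So the whole argument is a case-by-case inspection of the algebraic structure of $\Gamma$, and along the way one checks that each $\Gamma$ is continuous and maps bounded sets to bounded sets, so that Assumption \ref{assumption} and the hypotheses of the differentiability result for $S$ are met. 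Throughout I abbreviate $R_m v := (m\times v)\times m = v - (v^\top m)m$, the orthogonal projection onto the tangent space at $m\in S^2$, which is linear in $v$ with $\sup_{m\in S^2}\|R_m\|\le 1$.

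In cases 1 and 2 the map $\Gamma$ is affine in $p$. Case 1 reads $\Gamma(p)(m,t)=\alpha_1 R_m p(t)+\alpha_2 R_m\varphi(m,t)$, where the second summand is a fixed element of $\hat{\mathcal{B}}$ (under the natural assumption that the prescribed $\varphi$ lies in $\hat{\mathcal{B}}$), and the linear part $p\mapsto \alpha_1 R_{(\cdot)}p(\cdot)$ is bounded from $H^1(0,T;\mathbb{R}^3)$ into $\hat{\mathcal{B}}$ because $H^1(0,T)\hookrightarrow L^\infty(0,T)$ and $\sup_{m}\|R_m\|<\infty$. An affine bounded map is Fréchet differentiable with derivative equal to its linear part and vanishing remainder. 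Case 2 has the same structure, $\Gamma(p)(m,t)=\alpha_1 R_m H_\mathrm{app}(t)+\alpha_2 R_m p(m,t)$, with the fixed part now requiring $H_\mathrm{app}\in L^\infty(0,T;\mathbb{R}^3)$; the linear part $p\mapsto \alpha_2 R_{(\cdot)}p(\cdot,\cdot)$ is bounded from $H^1(0,T;H^2(S^2;\mathbb{R}^3))$ into $\hat{\mathcal{B}}$, using in addition the Sobolev embedding $H^2(S^2)\hookrightarrow L^\infty(S^2)$, valid since $\dim S^2 = 2$. Hence $\Gamma$, and therefore $F$, is Fréchet differentiable in these two cases.

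Case 3 is the only genuinely nonlinear one: with $\varphi(m,t)=(m^\top n(t))\,n(t)$ one gets $\Gamma(p)(m,t)=\alpha_1 R_m H_\mathrm{app}(t)+\alpha_2 R_m[(m^\top p(t))\,p(t)]$, i.e. $\Gamma$ is a quadratic polynomial in $p$. I would write $\Gamma(p)=c+\alpha_2\,\mathcal{Q}(p)$, where $\mathcal{Q}(p)=B(p,p)$ for the symmetric bilinear map $B(p_1,p_2)(m,t):=\tfrac12 R_m[(m^\top p_1(t))p_2(t)+(m^\top p_2(t))p_1(t)]$, which is bounded from $H^1(0,T;\mathbb{R}^3)\times H^1(0,T;\mathbb{R}^3)$ into $\hat{\mathcal{B}}$, again by $H^1(0,T)\hookrightarrow L^\infty(0,T)$ and $\sup_m\|R_m\|<\infty$. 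Then $\Gamma$ is Fréchet differentiable with $\Gamma'(p)h = 2\alpha_2 B(p,h)$ and \emph{exact} remainder $\Gamma(p+h)-\Gamma(p)-\Gamma'(p)h = \alpha_2 B(h,h)$, which is $O(\|h\|_{\mathcal{P}}^2)$; differentiability of $F$ then follows from the chain rule as before.

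The main obstacle is bookkeeping rather than a conceptual difficulty: one must verify boundedness of the relevant (bi)linear maps into $\hat{\mathcal{B}}=L^\infty(0,T;L^\infty(S^2;\mathbb{R}^3))$, which is precisely where the embeddings $H^1(0,T)\hookrightarrow L^\infty(0,T)$ and $H^2(S^2)\hookrightarrow L^\infty(S^2)$ are used, and one must confirm that in cases 1 and 2 the prescribed data ($\varphi$, respectively $H_\mathrm{app}$) indeed lie in $\hat{\mathcal{B}}$. Once this is in place, differentiability of $\Gamma$ is immediate from its polynomial structure — affine in cases 1 and 2, quadratic in case 3 — and the conclusion for $F$ is a direct application of the chain rule together with the Fréchet differentiability of $S$ and the linearity and boundedness of $G$.
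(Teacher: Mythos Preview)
Your argument is correct. In cases 1 and 2 it coincides with the paper's proof: both of you observe that $\Gamma$ is affine and that the linear part is bounded into $\hat{\mathcal{B}}$ via the relevant Sobolev embeddings, whence Fr\'echet differentiability is immediate.

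The genuine difference is in case 3. The paper treats $\Gamma$ as a nonlinear Nemytskij operator induced by $\hat\Gamma(t,n)(m)$, computes the pointwise derivative $\partial_n\hat\Gamma(t,n)=\alpha_2\,(m\times(nm^\top+n^\top m\,\mathrm{Id}_3))\times m$, checks that this is a Carath\'eodory function bounded uniformly in $t$, and then invokes an abstract Nemytskij-differentiability result (Goldberg, Thm.~7, with $p=q=\infty$) to conclude Fr\'echet differentiability from $L^\infty(0,T;\mathbb{R}^3)$ into $L^\infty(0,T;L^\infty(S^2;\mathbb{R}^3))$. You instead exploit that $\Gamma$ is \emph{quadratic} in $p$: writing $\Gamma(p)=c+\alpha_2 B(p,p)$ for a bounded symmetric bilinear $B:H^1(0,T;\mathbb{R}^3)^2\to\hat{\mathcal{B}}$, you obtain the derivative $\Gamma'(p)h=2\alpha_2 B(p,h)$ and the exact remainder $\alpha_2 B(h,h)=O(\|h\|_{\mathcal P}^2)$ by direct computation. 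Your route is more elementary and self-contained---it avoids the Nemytskij machinery entirely and even gives the remainder explicitly---while the paper's approach is more general in spirit (it would apply verbatim to non-polynomial inducing functions). Both lead to the same derivative and the same conclusion via the chain rule with $S$ and $G$.
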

\begin{proof}
    In case 1, since $\Gamma$ is affine in $p$, it is an affine Nemytskij operator induced by a differentiable, bounded function and it is thus bounded, continuous and Fréchet differentiable, see e.g. \cite{troeltzsch2010optimal, goldberg1992}. The same holds for case 2.

    In the third case, $\Gamma$ is a nonlinear Nemytskij operator between Lebesgue-Bochner spaces. We consider the inducing function $\hat{\Gamma}: (0,T) \times \mathbb{R}^3 \to L^\infty(S^2; \mathbb{R}^3)$ with
    \begin{align*}
        \hat{\Gamma}(t,n)(m) := \alpha_1 (m \times H_\mathrm{app}(t)) \times m + \alpha_2 (m \times (m^Tn)n ) \times m.
    \end{align*}
    This function is Fréchet differentiable with respect to $n$ and $\frac{\partial}{\partial n}\hat{\Gamma}(t,n) = \alpha_2 (m \times (nm^T+n^Tm\, \mathrm{Id}_3) \times m$ is linear in $n$ and thus a Carathéodory function. Since it is also bounded uniformly in time if $H_\mathrm{app} \in L^\infty(0,T;\mathbb{R}^3)$, we can invoke \cite[Thm. 7]{goldberg1992} for $p=q=\infty, X = \mathbb{R}^3, Y=L^\infty(S^2;\mathbb{R}^3)$, in the nomenclature of the cited theorem, and conclude that the Nemytskij operator generated by $\hat{\Gamma}$, namely $\Gamma(n)(t,m) := \hat{\Gamma}(t,n)(m)$ is Fr\'{e}chet differentiable from $L^\infty(0,T;\mathbb{R}^3)$ into $L^\infty(0,T;L^\infty(S^2;\mathbb{R}^3))$.

    Due to the chain rule, we can conclude that in all three cases, the forward operator $F: \mathcal{P} \to Y$ for $Y = L^2(0,T;\mathbb{R}^3)$ is Fr\'{e}chet differentiable.
\end{proof}

\section{Numerical Experiments}\label{sec:numerics}

The different parameter identification scenarios introduced in the preceding section are examined numerically in the following. For this, we consider the inverse problem
\begin{align*}
    \text{Find } p\in \mathcal{P}: F(p) &= y,
\end{align*}
where $Y \ni y^\delta = y + \eta$ is a given (simulated) measurement corrupted with a noise component $\eta$ with $\|\eta \|_Y \leq \delta$. Finding an optimal parameter $p^\ast$ will be realized by iteratively minimizing the $L^2$-discrepancy, i.e., we consider the discrepancy functional
\begin{align*}
    J(y^\delta, p) &= \| F(p) - y^\delta \|_Y^2
\end{align*}
for $Y = L^2(0,T;\mathbb{R}^3)$. Since regularization is necessary in general, in particular in the presence of noise, we employ the nonlinear Landweber method, which, together with an appropriate stopping criterion, constitutes a regularization method \cite{HankeNeubauerScherzer95}. Starting with an initial guess $p_0$, the iterates will be computed via
\begin{align*}
    p_{k+1} &= p_k - \omega_k \, (F'(p_k)^\star (F(p_k) - y^\delta),
\end{align*}
where the step length $\omega_k$ (at least approximately) fulfills $\omega_k \leq \|F'(p_k)^\star\|_\mathrm{op}$. The Hilbert space adjoint of $F'(p_k)$ is computed as presented in Prop. \ref{hilbert_space_adjoint}. For a full evaluation, we will conduct the Landweber iteration to convergence of the discrepancy, storing the iterates, and then evaluate the results using the discrepancy principle as well as the best achievable iterate compared to the ground truth. 

\subsection{Algorithmic considerations}
For the different settings described in Section \ref{sec:problems}, the forwards and backwards equations are implemented using a finite volume method, see \cite{Albers_toolbox}.
To avoid inverse crime, the ground truth is computed on a fine grid (20480 triangles on the unit sphere) and then interpolated to a coarser grid (5120 triangles), on which the algorithm is then applied. The triangle circumcenters of the coarser grid are pairwise distinct from those of the finer one.
After interpolation to the coarser grid, the observation operator is applied and Gaussian noise is added, where the intensity is chosen relative to the $L^\infty$-norm of the synthetic observation. 
For the step length, an upper bound is estimated numerically for each setting by approximating $\norm{F'(\tilde{p})}_\mathrm{op}$ for some realistic values of $\tilde{p}$. Armijo backtracking is used with a reduction factor of $0.7$ for the step length to ensure a monotonous decrease of the discrepancy.
Also, since the identification of the parameters is only possible on the interval $(0,T)$, the evaluation of the reconstruction error is considered on $(0,T)$.
The computational steps for the parameter identification are  outlined in Algorithm \ref{algo}. Since computational issues arise during the first iterations when estimating a time-dependent $n$, it is necessary to find a suitable initial value instead of the arbitrary constant value we use otherwise. Thus, in this case, we employ the algorithm without the smoothing step to find a better initial value first. This is controlled by enabling the flag $\mathrm{FIND\_INITIAL\_VALUE}$. The implementation used to generate the results shown can be found under \url{https://gitlab.informatik.uni-bremen.de/mpi-ip/pi-for-mnps}.

\begin{algorithm}
\SetKwData{Left}{left}\SetKwData{This}{this}\SetKwData{Up}{up}
\SetKwFunction{Union}{Union}\SetKwFunction{FindCompress}{FindCompress}
\SetKwInOut{Input}{input}\SetKwInOut{Output}{output}

\Input{Measurement $y^\delta$, default step length $\omega$, Armijo reduction factor $\alpha_A$, maximum number of Armijo steps $j_\mathrm{max}$, convergence tolerance $TOL$, maximum number of iterations $k_\mathrm{max}$, initial guess $p_1$, flag $\mathrm{FIND\_INITIAL\_VALUE}$}\
\Output{$p_k, y_k$}
\BlankLine
$y_1 \gets F(p_1)$\;
$\mathrm{disc}_1 = ||y_1 - y^\delta||$ \;
\For{$k = 2,\dots,k_\mathrm{max}$}{
$\mathrm{res} \gets G^\ast (y^\delta - y_{k-1})$\;
$\Psi \gets$ solution of adj. equation \eqref{adj_eq} for $p = p_{k-1}$\;
$\mathrm{grad} \gets \Gamma'(p_{k-1})^\ast S(p_{k-1}) \nabla \Psi$\;
\For(\tcp*[f]{Inner loop to determine the step size}){$j = 1,\dots, j_\mathrm{max}$}
{
$p_\mathrm{tmp} \gets p_{k-1} - \alpha_A^{j-1} \,\omega  \,\mathrm{grad}$\;
\If{$\neg \mathrm{FIND\_INITIAL\_VALUE}$}{$p_\mathrm{tmp} \gets \mathrm{smooth}(p_\mathrm{tmp})$\;}
$y_\mathrm{tmp} \gets F(p_\mathrm{tmp})$\;
$\mathrm{disc}_k  \gets || y_\mathrm{tmp} - y^\delta ||$\;
\If{$\frac{\mathrm{disc}_{k-1}-\mathrm{disc}_k}{\mathrm{disc}_{k-1}} > TOL$}{
$p_k \gets p_\mathrm{tmp}$\;
$y_k \gets y_\mathrm{tmp}$\;
break\;
}

}

}
\caption{Algorithm for iteratively identifying an unknown parameter $p$ from a given measurement $y^\delta$. If the measurement contains noise, the iterates $p_k$ must be stored or a different stopping criterion has to be checked at each iteration in order to avoid overfitting to data. If the parameter $p$ that is to be identified is the particles' easy axis $n$, then the algorithm is first run with the flag $\mathrm{FIND\_INITIAL\_VALUE}$ enabled in order to to compute a suitable initial value.}\label{algo}
\end{algorithm}

\subsection{Numerical results}
In order to test the reconstruction of the applied magnetic field $H_\mathrm{app}$ (\textbf{case 1} from Section \ref{sec:problems}), we simulate the magnetic response of MNPs for a three-dimensional applied field, where no anisotropy is considered, i.e., $\varphi = 0$.
The applied field is then reconstructed from different data cases. The data consists either of the complete probability density function $u$ for the magnetization direction $m$, i.e., the observation operator $G= \mathrm{id}_{W(0,T)\to L^2((0,T)\times M)}$ is chosen, or it consists of the mean magnetic moment $\bar{m}$, i.e., $G = \mathrm{E}_{m\sim u}[m]$ is the expectation operator as in equation \eqref{eq:expectation}. We write $G = \mathrm{E}[m]$ for short. Both simulated measurements are equipped with Gaussian noise of different strengths. The approximate $L^2$-norm of the noise will be denoted as $\delta$ in each case, and the simulated noisy measurements are called $y^\delta$.

\begin{figure}
    \centering
    \includegraphics[width=\textwidth]{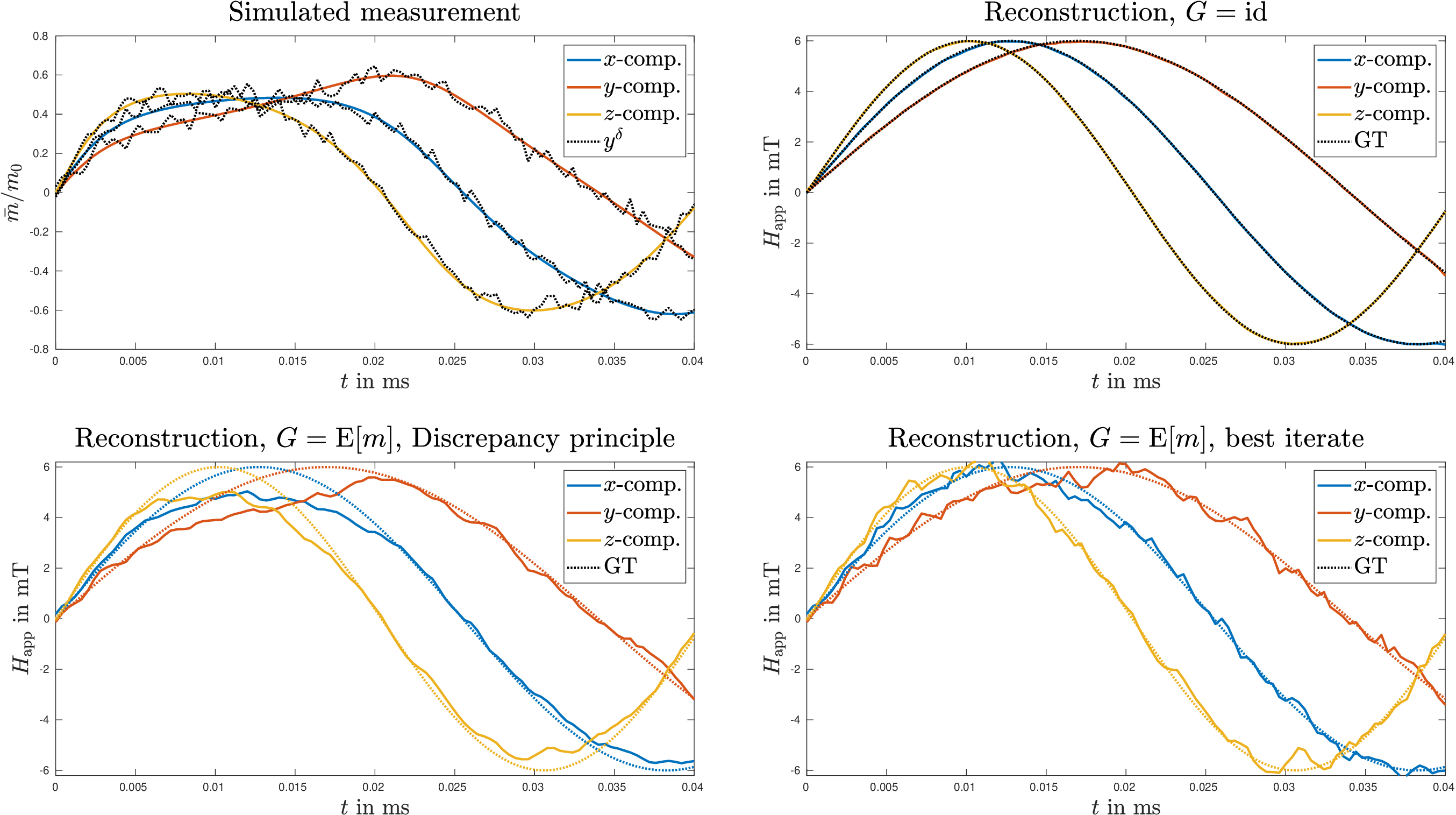}
    \caption{Reconstruction of the applied field $H_\mathrm{app}: [0,T] \to \mathbb{R}^3$ from a simulated measurement contaminated with 5\% Gaussian noise. Shown from the top left: The simulated measurement, the reconstruction without observation operator (i.e., the reconstruction is performed given the whole probability density function for $m$), with the expectation as the observation operator and the discrepancy principle and the Landweber iteration step which is closest to the ground truth (GT) in an $L^2$-sense, respectively.}
    \label{fig:Hnoise5}
\end{figure}

The results for 5\% noise are shown in Fig. \ref{fig:Hnoise5}. The simulated mean magnetic moment and corresponding Gaussian noise is shown as well as the reconstruction from the whole probability density function, from the mean using the discrepancy principle with $\tau = 1.1$, a choice that has been shown to work well for similar problems \cite{Hubmer_heuristic}, as well as the Landweber iterate with the lowest $L^2$-distance to the ground truth. This is shown in order to present the best theoretically achievable reconstruction for an optimal stopping criterion.
The results show that a good quality reconstruction is possible for the case of 5\% Gaussian noise. To obtain an accurate result, the discrepancy principle may not be sufficient. Instead, it might be necessary to apply further prior knowledge about $H_\mathrm{app}$ to the model or to use different stopping criteria, particularly if the noise level is not accurately known \cite{Hubmer_heuristic}.

Next, we consider \textbf{case 2} from Section \ref{sec:problems}. i.e., the reconstruction of the function $\varphi$. This can be understood as an unknown (effective) anisotropy landscape to be reconstructed from a measurement. For this, we simulate data for static uniaxial anisotropy in $y$-direction. The initial value is chosen to be uniaxial anisotropy in $x$-direction. Then, the anisotropy landscape is reconstructed. To simplify the problem, the anisotropy is assumed to be time-independent. For the magnetic field $H_\mathrm{app}$, a rapidly varying direction of the field vector is chosen such that the measurement includes the influence of the particle anisotropy in different directions.

\begin{figure}
    \centering
    \includegraphics[width=\textwidth]{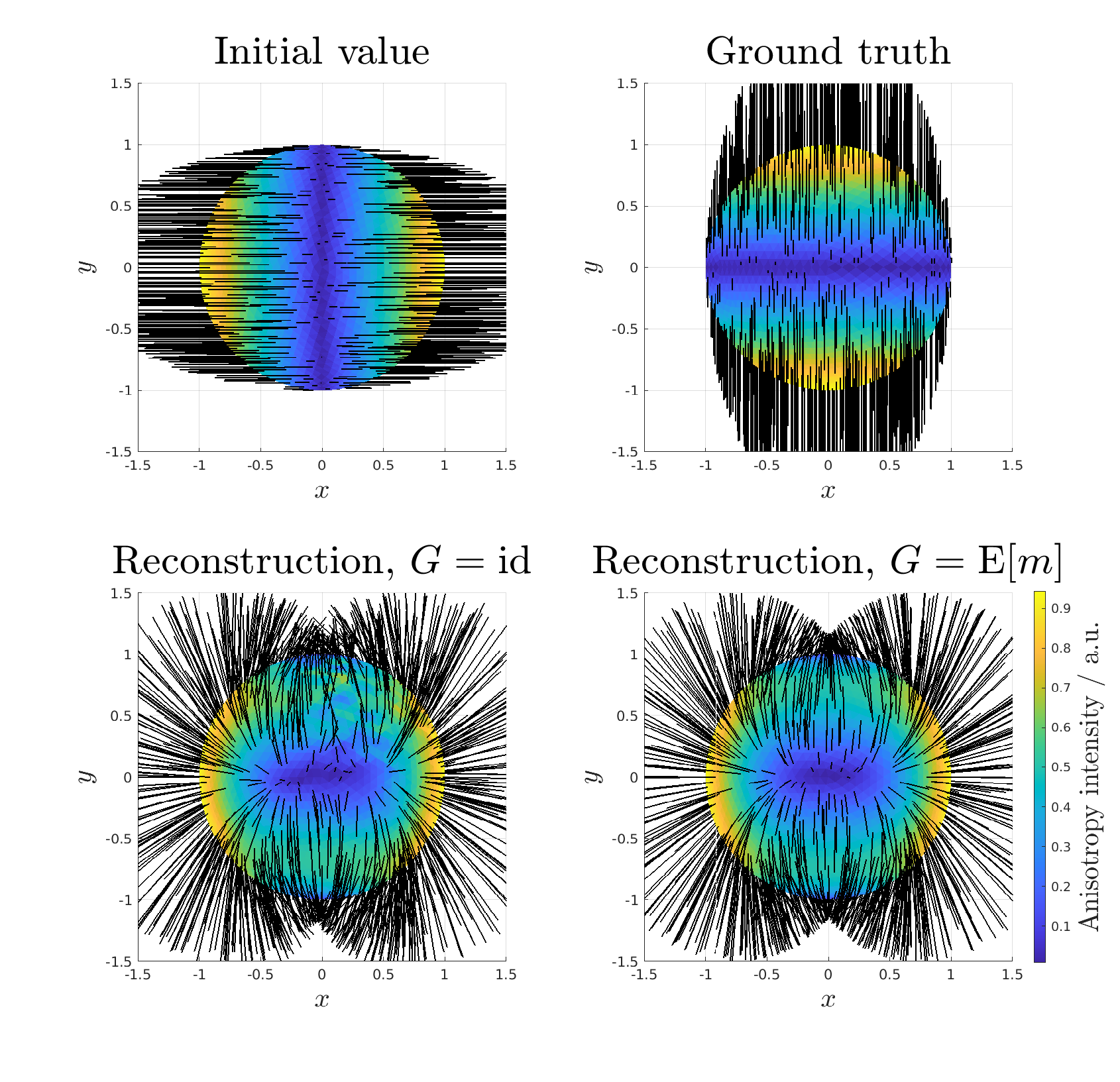}
    \caption{Reconstruction of a uniaxial anisotropy landscape. No noise is added. The sphere's surface color shows the anisotropy strength, while the black lines show its direction.}
    \label{fig:sd}
\end{figure}


The reconstruction results without noise and with and without an observation operator are presented in Fig. \ref{fig:sd}. In both cases, the discrepancy is minimized quite well, while the reconstructed anisotropy landscape differs substantially from the ground truth, even in the case $G= \mathrm{id}_{W(0,T)\to L^2((0,T)\times M)}$. This indicates that the sensitivity of the magnetic moments to changes in $\varphi$ may be low, so that further prior knowledge may be needed in order to obtain an accurate reconstruction for $\varphi$. However, the dependence on other factors such as the applied magnetic field and other particle parameters might have an influence on the performance.

For \textbf{case 3}, i.e., the reconstruction of a time-dependent easy axis $n(t)$, we simulate an ensemble of particles rotating  with constant velocity in the $xy$-plane. The applied magnetic field is a constant $\SI{10}{mT}$ in all directions, the anisotropy constant is chosen to be $\SI{1000}{\joule\per\cubic\meter}$.

\begin{figure}
    \centering
    \includegraphics[width=\textwidth]{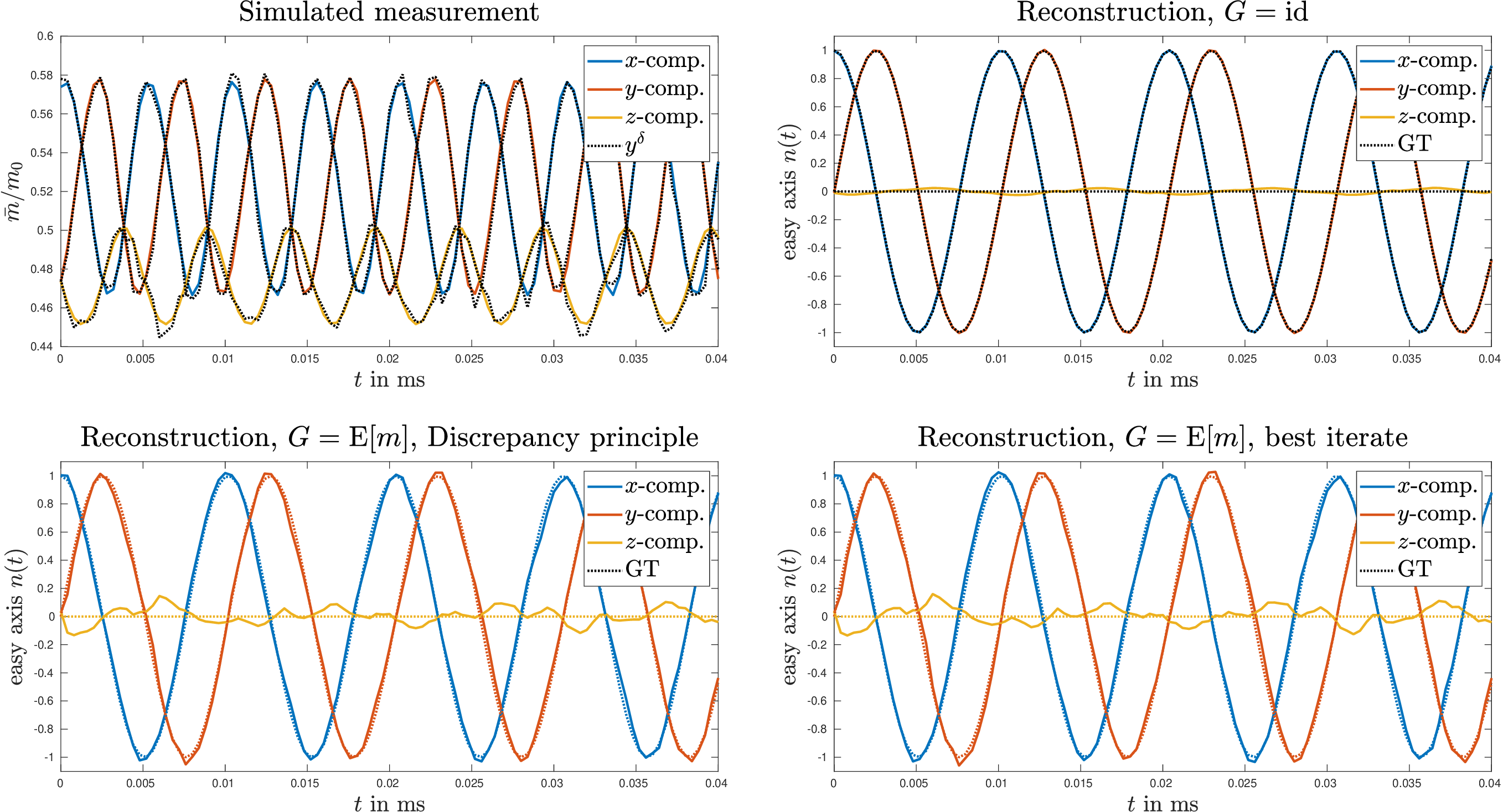}
    \caption{Reconstruction of the time-dependent easy axis $n(t)$ from a simulated measurement contaminated with 0.5\% Gaussian noise.}
    \label{fig:Nnoise05}
\end{figure}

\begin{figure}
    \centering
    \includegraphics[width=\textwidth]{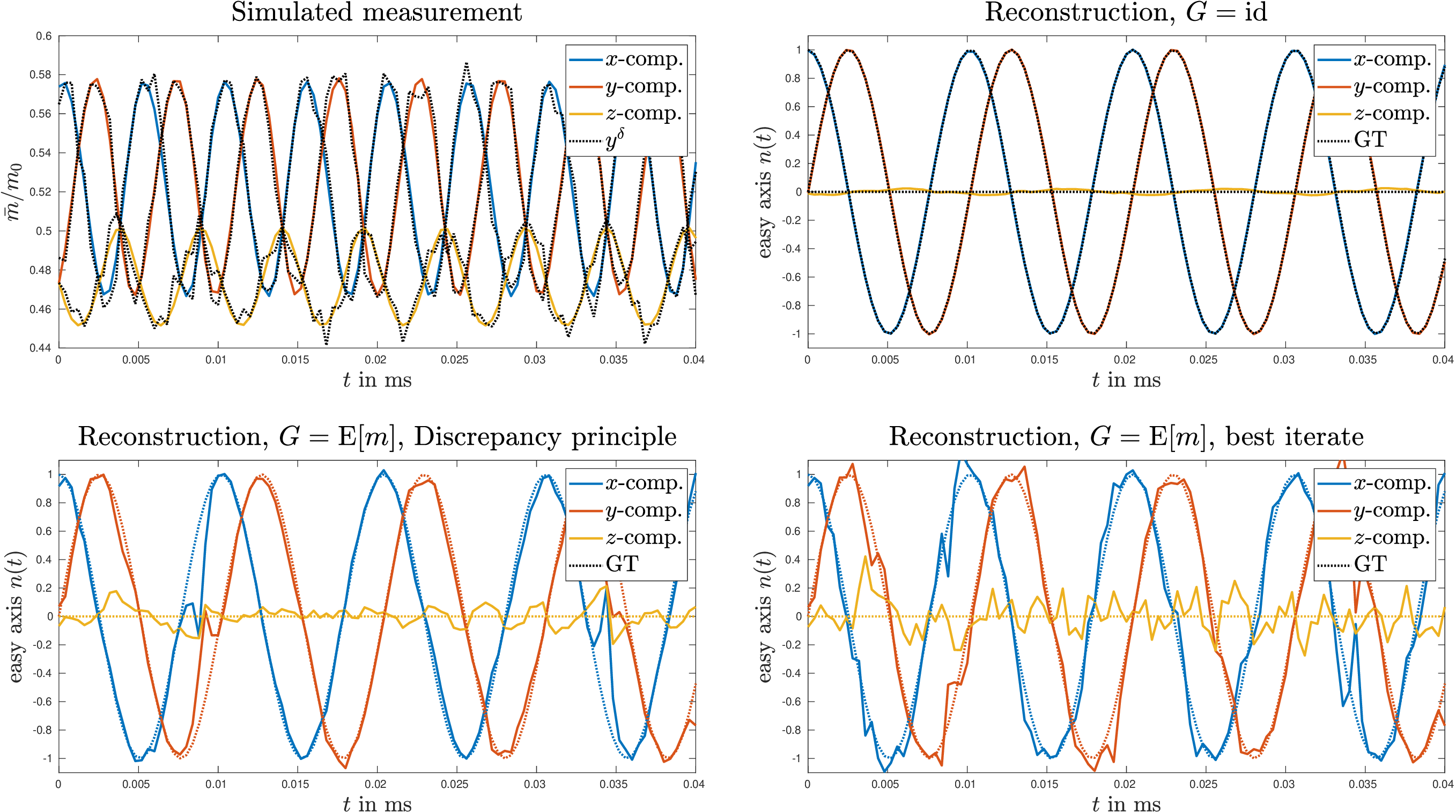}
    \caption{Reconstruction of $n(t)$ from simulations equipped with 1\% Gaussian noise.}
    \label{fig:Nnoise1}
\end{figure}

\begin{figure}
    \centering
    \includegraphics[width=\textwidth]{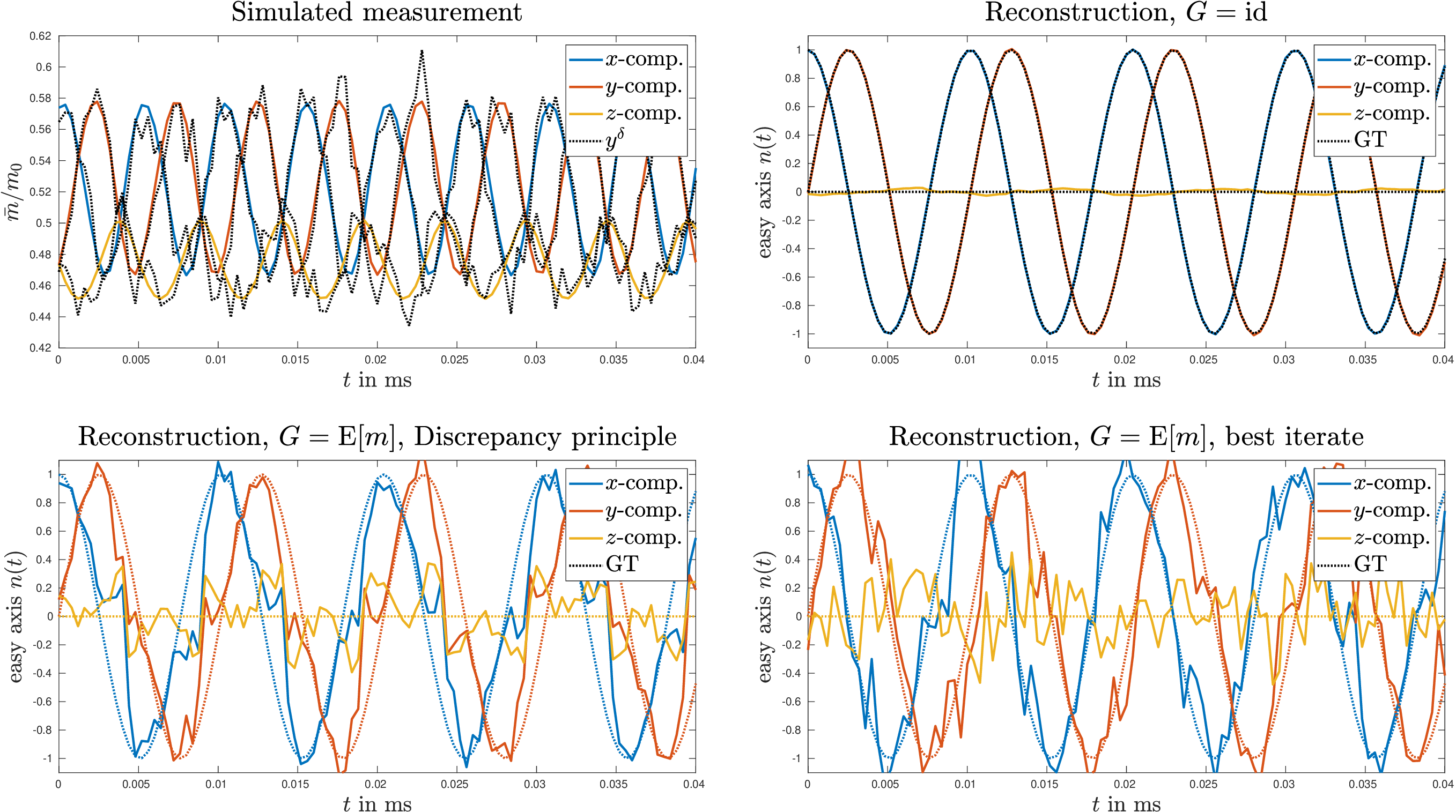}
    \caption{Reconstruction of $n(t)$ from simulations equipped with 2\% Gaussian noise. }
    \label{fig:Nnoise2}
\end{figure}

\begin{table}
\centering
\caption{Errors for the $n(t)$ reconstructions using the discrepancy principle.}
\label{table:n_errors}
\begin{tabular}{c | c c} 
  Noise level & $L^2$-error & $H^1$-error  \\ [0.2ex] 
 \hline
 0\% & 0.07 & 0.19  \\ 
 0.5\% & 0.08 & 0.2 \\
 1\% & 0.26 & 0.44 \\
 2\% & 0.56 & 0.78 \\
 5\% & 0.76 & 1.05 \\ [1ex] 
 
\end{tabular}

\end{table}

The results for 0.5\%, 1\%, and 2\% noise, respectively, are shown in Figs. \ref{fig:Nnoise05}-\ref{fig:Nnoise2}. Since the measurement amplitudes in this setting are much lower than for the previous one while the noise strength is expressed with respect to the measurement's $L^\infty$-norm, smaller noise levels have a greater impact on reconstruction quality. However, for the smaller noise regimes, the reconstruction is of high quality and the discrepancy principle is an effective stopping criterion. In Table \ref{table:n_errors}, the relative $L^2$-, as well as the  $H^1$-errors when using the discrepancy principle for different noise levels are listed. While in the noiseless case, complete accuracy is not achieved due to the numerical implementation and the presence of the observation operator, the error decreases with decreasing noise as expected.


\section{Discussion and Outlook}

In this work, we considered parameter identification problems for the Fokker-Planck equation on a compact manifold in the context of magnetic nanoparticle dynamics. Under reasonable assumptions, well-posedness of the forward equation, the existence of a minimizer for the discrepancy, and the Fr\'{e}chet differentiability of the forward operator have been proven. 

For different settings, namely reconstructing the magnetic field, reconstructing the particle anisotropy direction, and reconstructing the particle anisotropy landscape, numerical simulations have been performed. It was shown that the magnetic field and the (uniaxial) easy axis direction of the particles can be reconstructed with sufficient accuracy under reasonable noise conditions with the discrepancy principle. However, reconstructing the anisotropy landscape will require more prior information or different settings to lead to reasonable results. Additionally, if the noise level in the measurements is not accuately known in practice, heuristig stopping critera for the nonlinear Landweber method would have to be employed \cite{Hubmer_heuristic}.

On the theoretical side, while it was shown that a minimizer to the discrepancy functional exists, it remains an open question whether the Landweber method is guaranteed to converge to that optimum. For that, the tangential cone condition \cite{HankeNeubauerScherzer95} would have to be proven for this setting, as has been done for similar settings in \cite{Kaltenbacher_tangential_cone}. Also, while we employed the so-called reduced method for parameter identification in PDEs, which requires a solution of the forward equation in each step, the so-called all-at-once approach may introduce computational benefits by including the PDE in the discrepancy and optimizing for the data fit and the PDE simultaneously \cite{Kaltenbacher_2017}. In addition, different regularizing optimization techniques may achieve better results in future works, such as the Landweber-Kaczmarz method \cite{Nguyen_2019} or higher-order schemes such as the iteratively regularized Gauss-Newton method \cite{Kaltenbacher_iterative}.

On the applications side, an obvious next step would be to apply this technique to measured MPI signals. In this context, the unique noise distribution as well as the scanner receive chain, which results in band limitations, may pose challenges \cite{Paysen2020, Knopp2021efficient, Thieben2023}. Tracers used in MPI can exhibit a monodisperse \cite{tay2019pulsed} as well as a polydisperse behavior with respect to size and anisotropy distribution \cite{yoshida2013characterization, Albers_immobilized}. The proposed method is at this point only capable of identifying parameters in a monodisperse setting and would thus have to be extended or adjusted. 
An alternative field of potential applications is provided by immobilized and oriented nanoparticle markers \cite{MoeddelGrieseKluthKnopp2021} for example for the purpose of instrument orientation and position tracking which would require the joint identification of the marker position encoded in the applied field as well as the orientation given by the easy axis.  
In addition, the question if a collection of effective parameters could serve as a substitute for particle property distributions would be an interesting research question for future work.

Finally, repeated evaluations of the solution operator of the involved PDEs results in long computation times. In this direction, a replacement with learned methods approximating the forward operator could be a solution to shorten computation times. In \cite{Knopp2023fno}, it has been shown that such approaches are in principle feasible for rapidly obtaining solutions to the Fokker-Planck equation for nanoparticle dynamics with sufficient accuracy. Still, more research is needed in this area concerning the application to parameter identification.

\section*{Acknowledgment}
H. Albers and T. Kluth acknowledge funding by the German Research Foundation (DFG, Deutsche Forschungsgemeinschaft) - project
426078691.

\bibliographystyle{abbrv}
\bibliography{literature,literature-rsvd,literature-models}

\end{document}